\theoremstyle{plain}
\newtheorem{thm}{\protect\theoremname}
\theoremstyle{plain}
\newtheorem{conjecture}[thm]{\protect\conjecturename}
\theoremstyle{plain}
\newtheorem{cor}[thm]{\protect\corollaryname}
\theoremstyle{plain}
\newtheorem{prop}[thm]{\protect\propositionname}
\theoremstyle{plain}
\newtheorem{lem}[thm]{\protect\lemmaname}
\theoremstyle{remark}
\newtheorem{rem}[thm]{\protect\remarkname}
\newcommand{\lyxaddress}[1]{
	\par {\raggedright #1
	\vspace{1.4em}
	\noindent\par}
}
\newcounter{EQNR}
\providecommand{\conjecturename}{Conjecture}
\providecommand{\corollaryname}{Corollary}
\providecommand{\lemmaname}{Lemma}
\providecommand{\propositionname}{Proposition}
\providecommand{\remarkname}{Remark}
\providecommand{\theoremname}{Theorem}
\begin{document}
\title{Torsion groups of subexponential growth cannot act on finite-dimensional
CAT(0)-spaces without a fixed point}
\author{Hiroyasu Izeki\footnote{The first author was supported in part by JSPS Grants-in-Aid for Scientific Research Grant Number JP20H01802.}
\,and Anders Karlsson\footnote{The second author was supported in part by the Swiss NSF grants 200020-200400 and 200021-212864, and the Swedish Research Council grant 104651320.}}
\date{July 31, 2024}
\maketitle
\begin{abstract}
We show that finitely generated groups which are Liouville and without
infinite finite-dimensional linear representations must have a global
fixed point whenever they act by isometry on a finite-dimensional
complete CAT(0)-space. This provides a partial answer to an old question
in geometric group theory and proves partly a conjecture formulated
in \cite{NOP22}. It applies in particular to Grigorchuk's groups
of intermediate growth and other branch groups as well as to simple
groups with the Liouville property such as those found by Matte Bon
and by Nekrashevych. The method of proof uses ultralimits, equivariant
harmonic maps, subharmonic functions, horofunctions and random walks.
\end{abstract}

\section{Introduction}

The existence of infinite, finitely generated, torsion groups has
a long and rich history which started with a problem posed by Burnside
around the year 1900. The discovery that such groups actually could
exist came in the 1960s by work of Golod and Shafarevich. Since then
other infinite torsion groups with additional important features appeared,
notably the papers of Novikov-Adian on infinite Burnside groups and
the Grigorchuk groups having intermediate growth, see \cite{CSD21,Os22}
for recent discussions. 

From the viewpoint of geometric group theory it is a natural question
as to whether such groups can act on CAT(0)-spaces, indeed this topic
already appeared in the foundational essay by Gromov \cite[sect. 4.5.C]{Gr87}.
By a theorem of Schur \cite{S11} (or by Selberg's lemma) it is known
that any linear finitely generated torsion group must be finite, thus
one can say that every action by a finitely generated torsion group
on the classical symmetric spaces of nonpositive curvature must fix
a point by the Cartan fixed point theorem. Another instance is the
elegant argument in Serre's book \cite[sect. 6.5]{Se80} that finitely
generated torsion groups acting on a tree must have a global fixed-point.
By the end of the 1990s it was a well-known open problem formulated
as follows (see \cite{Sw99,Be00}): 

\emph{Can a finitely generated group that acts properly discontinuously
(and cocompactly) by isometry on a proper CAT(0)-space contain an
infinite torsion group?}

At least at that time the expected answer depended on whether the
cocompact assumption is included or not. It was again referred to
as one of the key open problems in \cite{Ca14}. 

Grigorchuk constructed a continuum of finitely generated torsion groups
of growth strictly between polynomial and exponential (thus amenable)
in the early 1980s, see \cite{G80,G84,dlH00,CSD21} and references
therein. Amenable groups admit proper actions on Hilbert spaces (\cite{BCV95}).
Both Grigorchuk groups and Burnside groups, that is, infinite torsion
groups of bounded exponent, can act without a global fixed point on
infinite-dimensional CAT(0) cubical complexes (see \cite{Sa95,Os18,Sc22}).
Also note that \cite[Theorem 25]{Se80} shows that every infinitely
generated group acts on a tree without a global fixed-point, see also
\cite[II.7.11]{BH99} for another infinitely generated torsion group
example. We refer to the recent paper \cite{HO22} for a historical
survey on this problem, its variants and with references to the many
previous partial results. In \cite{NOP22,HO22} the following conjecture
is stated:
\begin{conjecture}
\label{conj:torsion}(\cite{NOP22}) Every finitely generated group
acting without a global fixed point on a finite-dimensional CAT(0)
complex contains an element of infinite order. 
\end{conjecture}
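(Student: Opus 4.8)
The plan is to prove the contrapositive: a finitely generated \emph{torsion} group $G$ acting by isometries on a finite-dimensional complete CAT(0) complex $X$ must have a global fixed point. Two reductions are available immediately. First, if some $G$-orbit is bounded, then its closed convex hull is a bounded $G$-invariant set, whose circumcenter exists by completeness and is fixed by every isometry preserving the set; hence one may assume every orbit is unbounded. Second, since $G$ is torsion, Schur's theorem recalled above shows that $G$ has \emph{no} infinite finite-dimensional linear representation, so the second hypothesis of the established partial result is automatic for torsion groups. The only remaining hypothesis of that theorem which is not free is the Liouville property, and the entire difficulty of the conjecture is therefore concentrated in the non-Liouville (in particular non-amenable) torsion groups, such as the free Burnside groups of large odd exponent.

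For the scaffolding I would argue by induction on $\dim X$, combined with an analysis of the action on the Tits boundary $\partial_\infty X$, which is a CAT(1) space of strictly smaller dimension. A key structural feature is that every element of $G$ is \emph{elliptic}: a finite cyclic group has a bounded orbit and hence a fixed point, so no individual isometry has positive translation length. Now consider the $G$-action on $\partial_\infty X$. If $G$ fixes a point $\xi \in \partial_\infty X$, the associated Busemann function transforms by the Busemann cocycle $c \colon G \to \mathbb{R}$, which is a homomorphism; since $G$ is torsion, $c$ is trivial and the Busemann function is genuinely $G$-invariant. One then descends to a lower-dimensional $G$-invariant CAT(0) space transverse to $\xi$ (via an Adams--Ballmann-type reduction, using that the Busemann flow is $G$-equivariant) and invokes the inductive hypothesis, lifting the resulting fixed point back to $X$. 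If instead $G$ fixes no point of $\partial_\infty X$, one seeks a circumcenter of a minimal nonempty closed $G$-invariant subset of the Tits boundary; here finite-dimensionality is what guarantees that such minimal sets and their circumcenters are well-behaved, and it is what prevents the escape-to-infinity phenomena that occur in infinite-dimensional cube complexes.

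The genuinely hard part is the irreducible core: unbounded orbits, no fixed point at infinity, and $G$ not Liouville. I would drive a random walk $Z_n = g_1\cdots g_n$ by a symmetric finitely supported generating measure and study the drift $\ell = \lim_n \tfrac{1}{n}\, d(o, Z_n o)$. When $\ell = 0$ the harmonic-map and subharmonic-function machinery of the partial result should still confine a barycenter sequence and produce an invariant point, using that every displacement function $x \mapsto d(x, sx)$ is convex and that Busemann functions are subharmonic along the walk. When $\ell > 0$, however, the Karlsson--Ledrappier multiplicative ergodic theorem produces almost surely a Busemann function along the trajectory, hence a nontrivial $G$-stationary measure on the horofunction boundary; one must then combine this boundary map with the ellipticity of every element to force a contradiction. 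The principal obstacle is precisely to execute this last step \emph{without} the Liouville hypothesis: for non-amenable torsion groups there is no a priori control on the Poisson boundary, the equivariant harmonic map need not degenerate, and one must find a replacement — most plausibly a dimension-bounded ergodic estimate on the boundary action, or a quantitative exploitation of the absence of infinite linear representations — to rule out a nontrivial equivariant boundary map. I expect essentially all of the difficulty, and the reason the full conjecture remains open beyond the Liouville case, to lie in this non-amenable regime.
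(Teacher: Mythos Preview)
The statement you were given is a \emph{conjecture}, not a theorem: the paper does not prove it and explicitly presents it as open. What the paper does prove is the special case where $\Gamma$ is weakly Liouville (the corollary following the main theorem), via ultralimits, equivariant harmonic maps, and the harmonic-function analysis under zero drift. Your proposal correctly recognises this: your ``$\ell = 0$'' paragraph is essentially the paper's argument, and you rightly isolate the $\ell > 0$ (non-Liouville) torsion groups --- free Burnside groups of large exponent being the emblematic case --- as the obstruction that keeps the conjecture open. So on the part that is actually established there is no real discrepancy, and on the rest neither you nor the paper has a proof.

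That said, your intermediate scaffolding --- the dimension induction via a fixed $\xi \in \partial_\infty X$ --- has a genuine gap even before one reaches the hard non-Liouville core. From a $G$-fixed $\xi$ with trivial Busemann cocycle you obtain $G$-invariant horoballs, but there is in general no ``lower-dimensional $G$-invariant CAT(0) space transverse to $\xi$'' to descend to: horospheres in a CAT(0) space are not CAT(0), the Adams--Ballmann and Caprace--Lytchak reductions yield an invariant flat or a fixed boundary point rather than a transverse CAT(0) slice, and a fixed point in any such boundary or quotient object does not lift back to a fixed point in $X$. The paper flags exactly this difficulty (``our theorem by-passes the trouble with that there might be a fixed point on the visual boundary at infinity'') and circumvents it not by induction on dimension but by passing to rescaled ultralimits, producing an equivariant harmonic map of positive energy, and then using zero drift to force the image into an invariant finite-dimensional Euclidean flat where Schur's theorem finishes the job. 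Your Karlsson--Ledrappier suggestion for the $\ell>0$ regime is a natural line of attack, but, as you yourself concede, extracting a contradiction from a stationary measure on the horoboundary for a non-amenable torsion group is precisely where current methods run out.
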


For CAT(0) cubical complexes this is known from a paper by Sageev
\cite{Sa95} (see \cite{LV20} for a discussion), and recently extended
to CAT(0) cubical complexes without infinite cubes in \cite{GLU24}.
Norin, Osajda, and Przytycki recently proved the conjecture for two-dimensional
CAT(0) complexes (with certain additional minor assumptions) as the
main result of \cite{NOP22}.  We say that a finitely generated group
is \emph{weakly Liouville} if it admits a symmetric measure $\mu$
of finite support for which the drift $\ell$ of the corresponding
random walk is $0$: 
\[
\ell=\lim_{n\rightarrow\infty}\frac{1}{n}\sum_{g\in\Gamma}\left\Vert g\right\Vert \mu^{*n}(g)=0.
\]
It implies amenability and the property gets its name because it is
known that this is equivalent to the non-existence of non-constant
bounded harmonic functions for the Laplacian defined by $\mu$, see
\cite{La23}. This class of groups contains all groups of subexponential
growth due to Avez (\cite{La23}), many groups acting on rooted trees
\cite{AOMV16} and a rich collection of groups coming from dynamical
systems \cite{M14,MNZ23}. Our main result is:
\begin{thm}
\label{thm:main}Let $\Gamma$ be a finitely generated group which
is weakly Liouville and such that every homomorphism $\Gamma\rightarrow GL_{N}(\mathbb{R})$
has finite image for any finite $N$. Whenever $\Gamma$ acts by isometries
on a complete CAT(0)-space $Y$ of finite dimension, it must have
a global fixed point in $Y$.
\end{thm}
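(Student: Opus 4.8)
The plan is to argue by contradiction: assuming $\Gamma$ acts on $Y$ without a global fixed point, I will produce either a finite-dimensional linear representation of $\Gamma$ with infinite image, or a violation of the weak Liouville property. Fix a symmetric, finitely supported, generating measure $\mu$ witnessing weak Liouville, and consider the convex continuous function $E(x)=\sum_{g\in\Gamma}\mu(g)\,d(x,gx)^{2}$ on $Y$. As $\mathrm{supp}(\mu)$ generates $\Gamma$, a zero of $E$ would be a global fixed point, so either $\inf_{Y}E>0$, or $\inf_{Y}E=0$ is not attained. In either case a point-selection argument at scale $\sqrt{E}$, with vanishing relative error — using completeness of $Y$ to exclude the degenerate outcome, which would again yield a fixed point — produces points $x_{n}\in Y$ and scales $\lambda_{n}^{2}=E(x_{n})\to\inf_{Y}E$ which are almost energy-minimizing on balls $B(x_{n},R_{n}\lambda_{n})$ with $R_{n}\to\infty$.

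Next I would pass to the ultralimit $Y_{\omega}=\lim_{\omega}\bigl(Y,\tfrac{1}{\lambda_{n}}d,x_{n}\bigr)$ over a non-principal ultrafilter. By Kleiner's theorem, rescaling a CAT(0)-space and forming ultralimits do not raise the geometric dimension — this is the essential use of the finite-dimensionality of $Y$ — so $Y_{\omega}$ is again a complete CAT(0)-space of dimension $\le\dim Y$ carrying an isometric $\Gamma$-action, the scaled energy $E_{\omega}$ attains its infimum $1$ at the base point $o_{\omega}$ and is $\ge 1$ everywhere, and the orbit map $\Phi\colon\gamma\mapsto\gamma o_{\omega}$ is an equivariant $\mu$-harmonic map that is non-constant (because $E_{\omega}(o_{\omega})=1>0$). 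The problem is thereby reduced to studying a genuine non-constant equivariant harmonic map, of positive energy, into a finite-dimensional complete CAT(0)-space.

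The core of the argument, and the step I expect to be the main obstacle, is a Bochner-type rigidity for $\Phi$. Its energy density is constant by equivariance, hence bounded, and it should be $\mu$-subharmonic by a discrete Bochner inequality for harmonic maps into CAT(0)-spaces, the negative-curvature contribution being $\le 0$ with no positive domain-curvature term. Weak Liouville rules out non-constant bounded $\mu$-subharmonic functions — apply the submartingale convergence theorem to $u(Z_{n})$ along the $\mu$-random walk $Z_{n}$, using the characterization recalled in the introduction — so the Bochner inequality is forced to be an equality, whose rigidity case pins the image of $\Phi$ inside a $\Gamma$-invariant flat, an isometrically embedded Euclidean subspace $F\cong\mathbb{R}^{m}\subseteq Y_{\omega}$ with $m\ge1$ since $\Phi$ is non-constant. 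Executing this in a finite-dimensional but not locally compact CAT(0)-space — defining the energy density and the Bochner quantities via tangent cones and horofunctions, and controlling the equality case — is the technical heart, and it is where horofunctions and subharmonic functions enter the proof.

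Finally, $\Gamma$ acts by isometries on $F\cong\mathbb{R}^{m}$, giving a homomorphism to $\mathrm{Isom}(\mathbb{R}^{m})=O(m)\ltimes\mathbb{R}^{m}$ whose rotational part has finite image by hypothesis; let $\Gamma_{0}\le\Gamma$ be the finite-index subgroup acting by translations, with translation homomorphism $\tau\colon\Gamma_{0}\to\mathbb{R}^{m}$. If $\tau\neq0$, then, viewing $\mathbb{R}^{m}$ as affine transformations inside $GL_{m+1}(\mathbb{R})$ and inducing, $\mathrm{Ind}_{\Gamma_{0}}^{\Gamma}(\tau)$ is a finite-dimensional linear representation of $\Gamma$ with infinite image, contradicting the hypothesis on $\Gamma$. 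If $\tau=0$, then $\Gamma$ has finite image in $\mathrm{Isom}(\mathbb{R}^{m})$ and hence fixes a point $p\in F\subseteq Y_{\omega}$, so $E_{\omega}(p)=0$, contradicting $E_{\omega}\ge 1$. In both cases the assumption collapses, so $\Gamma$ has a global fixed point in $Y$. I would separately record the routine facts used above: bounded orbits have circumcenters (completeness), $\mathrm{supp}(\mu)$ may be enlarged to a generating set, and the standard behaviour of ultralimits and of induced representations.
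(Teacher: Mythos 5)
Your outer architecture matches the paper's: assume no fixed point, rescale and pass to ultralimits (using that ultralimits do not raise the dimension of CAT(0)-spaces) to obtain an equivariant $\mu$-harmonic map of strictly positive energy into a finite-dimensional complete CAT(0)-space, produce a $\Gamma$-invariant flat $F\cong\mathbb{R}^{m}$ with $m\ge 1$, and then derive a contradiction from the hypothesis that every finite-dimensional real representation of $\Gamma$ has finite image together with the Cartan--Bruhat--Tits fixed point theorem. The reduction to the harmonic map and the endgame on the flat are essentially correct (your induction detour is unnecessary: $\mathrm{Isom}(\mathbb{R}^{m})$ embeds in $GL_{m+1}(\mathbb{R})$, so the whole affine representation has finite image by hypothesis and hence a fixed point in $F$, contradicting positive energy).

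The genuine gap is the step you yourself flag as the ``technical heart,'' and the mechanism you propose for it does not work as stated. You want to apply the Liouville property (no non-constant bounded $\mu$-subharmonic functions) to the energy density of the harmonic map and then invoke the rigidity case of a discrete Bochner inequality. But, as you note, the energy density of an equivariant map is \emph{constant} on $\Gamma$, so the Liouville property applied to it is vacuous; all the content would have to come from a Bochner identity of the form $0=\Delta e(\Phi)\le -(\text{nonnegative rigidity term})$, and no such identity holds for harmonic maps from a general discrete group -- the discrete Bochner formula carries domain correction terms (Wang/Izeki--Nayatani type invariants) that are not signed in your favor, which is why this route yields fixed-point theorems only under spectral-gap hypotheses, not under amenability or Liouville hypotheses. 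The paper's actual mechanism is different and uses the \emph{zero drift} quantitatively, not merely the absence of bounded harmonic functions: first $\mu$ is replaced by a convex combination $\sum a_{n}\mu^{*n}$ with support all of $\Gamma\setminus\{e\}$, finite second moment, and still zero drift (the Appendix); then, following Izeki, almost every random walk trajectory selects a point $\xi(\sigma)$ in an ultralimit $Z_{\infty}\cup\partial Z_{\infty}$ whose (unbounded!) horofunction pulls back under the harmonic map to an \emph{exactly} $\mu$-harmonic function on $\Gamma$ -- this exact harmonicity, rather than mere subharmonicity, is what encodes ``weakest convexity''; finally a compactness principle valid in finite dimensions places $\xi(\sigma)$ in $\partial Z$, giving a family of mutually parallel geodesics through the orbit and, via the splitting theorem, the invariant flat. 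None of these ingredients (full-support measure with zero drift, the random-walk construction of harmonic horofunction pullbacks, the compactness principle, the splitting argument) is supplied or replaceable by your Bochner sketch, so the central step of your proof remains unproved.
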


Branch groups, which are special types of groups acting on rooted
trees, are not linear as shown by Delzant-Grigorchuk and a more general
theorem of Abért \cite{A06}, thus they have no faithful linear representation,
and at the same time many such groups are just-infinite, meaning that
every proper quotient is finite \cite{BGZ03}, thus whenever they
are Liouville, see \cite{AOMV16}, the theorem applies.

In view of Schur's theorem, Conjecture \ref{conj:torsion} is true
for any finitely generated group admitting an infinite linear representation,
so Theorem \ref{thm:main} implies:
\begin{cor}
Conjecture \ref{conj:torsion} is true for groups which are weakly
Liouville.
\end{cor}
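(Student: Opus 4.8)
The plan is to derive this directly from Theorem~\ref{thm:main} together with Schur's theorem~\cite{S11}; there is no new geometric content. Let $\Gamma$ be a finitely generated weakly Liouville group acting by isometries without a global fixed point on a finite-dimensional CAT(0) complex $Y$; the goal is to exhibit an element of infinite order in $\Gamma$. The first thing I would record is that a finite-dimensional CAT(0) complex is a \emph{complete} CAT(0)-space, so that Theorem~\ref{thm:main} applies to $Y$: this holds for the standard notions of complex (e.g.\ an $M_\kappa$-polyhedral complex with finitely many isometry types of cells is complete geodesic, cf.\ Bridson--Haefliger), and in any case one may pass to the metric completion $\bar Y$, which is again CAT(0) of the same finite dimension and carries the $\Gamma$-action; a global fixed point of $\Gamma$ in $\bar Y$ would force $\Gamma$ to have bounded orbits in $Y$ and hence a circumcenter fixed point back in $Y$, contrary to hypothesis.

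With $Y$ now a complete finite-dimensional CAT(0)-space on which the weakly Liouville group $\Gamma$ acts without a global fixed point, the contrapositive of Theorem~\ref{thm:main} says that $\Gamma$ must fail the linearity hypothesis: there exist $N$ and a homomorphism $\varphi\colon\Gamma\to GL_N(\mathbb{R})$ with infinite image. The image $\varphi(\Gamma)$ is then an infinite, finitely generated linear group, so by Schur's theorem~\cite{S11} (a finitely generated linear torsion group is finite) it is not a torsion group; choose $g\in\Gamma$ with $\varphi(g)$ of infinite order. Since $\varphi(g^{n})=\varphi(g)^{n}\neq e$ for all $n\neq 0$, the element $g$ has infinite order in $\Gamma$. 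This is exactly the conclusion of Conjecture~\ref{conj:torsion} for $\Gamma$.

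I do not expect a genuine obstacle here, since the whole weight of the argument sits in Theorem~\ref{thm:main}. The only points that deserve a sentence are the passage from ``CAT(0) complex'' to ``complete CAT(0)-space'' handled above, and the elementary observation that the homomorphic image of a finitely generated group is again finitely generated, which is what makes Schur's theorem applicable. Equivalently, and perhaps more transparently, the corollary can be phrased contrapositively: if $\Gamma$ were a torsion group then, by Schur, every homomorphism $\Gamma\to GL_N(\mathbb{R})$ would have finite image, so Theorem~\ref{thm:main} would furnish a global fixed point, contradicting the hypothesis; hence $\Gamma$ contains an element of infinite order.
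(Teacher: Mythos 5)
Your proof is correct and is essentially the paper's own argument, which is compressed into the single sentence preceding the corollary: by Schur's theorem the conjecture holds for any finitely generated group with an infinite linear representation, and Theorem~\ref{thm:main} handles the complementary case. Your extra remark on passing from a CAT(0) complex to a complete CAT(0)-space is a reasonable point the paper leaves implicit, but it does not change the route.
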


The following special case applies in particular to Grigorchuk's torsion
groups of intermediate growth:
\begin{cor}
\label{thm:torsion}Let $\Gamma$ be a finitely generated torsion
group of subexponential growth. Whenever $\Gamma$ acts by isometries
on a complete CAT(0)-space $Y$ of finite dimension, it must have
a global fixed point in $Y$. 
\end{cor}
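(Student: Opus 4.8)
The plan is to derive Corollary~\ref{thm:torsion} directly from Theorem~\ref{thm:main}, so it suffices to verify that a finitely generated torsion group $\Gamma$ of subexponential growth satisfies the two hypotheses of that theorem. For the first hypothesis, I would invoke Avez's theorem, as recalled in the discussion preceding Theorem~\ref{thm:main} (see \cite{La23}): a finitely generated group of subexponential growth has trivial Poisson boundary for the Laplacian attached to any symmetric finitely supported generating measure $\mu$, and by the cited equivalence between triviality of the bounded harmonic functions and vanishing of the drift, this forces $\ell=0$ for such a $\mu$. Hence $\Gamma$ is weakly Liouville.

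For the second hypothesis, let $\varphi\colon\Gamma\rightarrow GL_N(\mathbb{R})$ be an arbitrary homomorphism with $N$ finite. The image $\varphi(\Gamma)$ is finitely generated, being a quotient of the finitely generated group $\Gamma$, and it is again a torsion group for the same reason; moreover it is linear, as a subgroup of $GL_N(\mathbb{R})$. By Schur's theorem \cite{S11} (alternatively by Selberg's lemma, as noted in the introduction), a finitely generated linear torsion group is finite, so $\varphi(\Gamma)$ is finite. Thus every homomorphism $\Gamma\rightarrow GL_N(\mathbb{R})$ has finite image.

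Both hypotheses of Theorem~\ref{thm:main} are therefore met, and the conclusion that any isometric action of $\Gamma$ on a complete CAT(0)-space of finite dimension has a global fixed point follows immediately. Since the corollary is a direct specialization, there is no real obstacle beyond Theorem~\ref{thm:main} itself; the only points demanding attention are the correct invocation of Avez's theorem together with the boundary/drift equivalence, and the elementary observation that a homomorphic image of a finitely generated torsion group is again finitely generated and torsion, so that Schur's theorem is applicable.
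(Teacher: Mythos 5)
Your proposal is correct and follows essentially the same route as the paper: the authors likewise rely on Avez's theorem (via \cite{La23}) to see that subexponential growth implies the weakly Liouville property, and on Schur's theorem to conclude that the finite-dimensional linear representation arising in the proof of Theorem \ref{thm:main} has finite image. The only cosmetic difference is that the paper re-runs the final step of the main argument with Schur's theorem inserted, whereas you package the same facts as a verification of the hypotheses of Theorem \ref{thm:main}; the content is identical.
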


As pointed out above, the finite generation and finite-dimensionality
assumptions are necessary, and let us also emphasize that our result
is more general in the sense that: 
\begin{itemize}
\item the space $Y$ is not necessarily proper or a CAT(0)-complex, and
it is not assumed to have a large isometry group.
\item no assumption on the action is made, such as being non-elementary
(in the sense of assuming no fixed-point in $\partial Y$), properly
discontinuous, or being part of a cocompact action. 
\end{itemize}
It was previously shown by Caprace-Monod in \cite[Corollary E]{CM13}
that finitely generated groups of intermediate growth cannot be a
\emph{discrete} subgroup of $Isom(Y)$ under the assumption that the
space $Y$ is a proper CAT(0)-space having a \emph{cocompact} isometry
group. On the other hand, Grigorchuk groups are by construction subgroups
of the automorphism groups of rooted binary trees, thus they are subgroups
of the isometry groups of certain proper CAT(0)-spaces possessing
a global fixed-point, the root. One should also keep in mind that
Caprace-Lytchak, \cite[Theorem 1.6]{CL10}, extended theorems of Burger-Schroeder
and Adams-Ballmann to CAT(0) spaces of finite telescoping dimension
showing that amenable groups either fix a point at infinity or leave
a flat invariant. Our theorem by-passes the trouble with that there
might be a fixed point on the visual boundary at infinity. In this
connection, we can mention a paper of Papasoglu-Swenson \cite[Theorem 3.17]{PS18}
which shows the following: Assume that $G$ acts properly and cocompactly
on a proper CAT(0)-space. If $\Gamma$ is an infinite torsion subgroup
of $G$ then it cannot fix a point in the limit set of $\Gamma$.
(Note that in our setting, say assuming finite generation and subexponential
growth but no properness or cocompactness, the limit set is empty
since by Theorem \ref{thm:torsion} any orbit is bounded.) 

Groups which have a global fixed point whenever they act on simplicial
trees are said to have \emph{Serre's property FA}. This is the 1-dimensional
version of \emph{property $\mathrm{FA}_{n}$} in \cite{F09}, that
is, having a global fixed point whenever acting by isometry on $n$-dimensional
CAT(0) cell complexes. The literature on these topics is vast, and
we cannot survey this here. For property FA and the groups of the
type of most immediate relevance for our paper, we can refer to \cite{DG08,GS23}
for results and more information.

We may also formulate:
\begin{cor}
\label{thm:simple}Let $\Gamma$ be a finitely generated simple group
which is weakly Liouville. Whenever $\Gamma$ acts by isometries on
a complete CAT(0)-space $Y$ of finite dimension, it must have a global
fixed point in $Y$.
\end{cor}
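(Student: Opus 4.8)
I plan to deduce Corollary~\ref{thm:simple} directly from Theorem~\ref{thm:main}. The weak Liouville hypothesis is common to both statements, so it suffices to show that a finitely generated simple group $\Gamma$ which is weakly Liouville automatically satisfies the remaining hypothesis of Theorem~\ref{thm:main}: every homomorphism $\phi\colon\Gamma\to GL_N(\mathbb{R})$ has finite image, for every $N$. Granting this, Theorem~\ref{thm:main} applies and produces the global fixed point in $Y$, so the only work is the group-theoretic reduction.

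To carry this out, fix a homomorphism $\phi\colon\Gamma\to GL_N(\mathbb{R})$. Its kernel is a normal subgroup of the simple group $\Gamma$, hence is either all of $\Gamma$ or trivial. In the first case $\phi(\Gamma)$ is trivial, so finite, and we are done. In the second case $\phi$ embeds $\Gamma$ as a finitely generated linear group. Here I use that weakly Liouville groups are amenable (as recalled in the introduction), so $\Gamma$ contains no non-abelian free subgroup; by Tits' alternative a finitely generated linear group with no such subgroup is virtually solvable. If $\Gamma$ is finite then $\phi(\Gamma)$ is finite and we are done. If $\Gamma$ is infinite then, being simple, it has no proper subgroup of finite index — the action on the cosets of such a subgroup would have kernel a proper normal subgroup, hence trivial, embedding $\Gamma$ into a finite symmetric group, a contradiction. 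Consequently a finite-index solvable subgroup of $\Gamma$ must be all of $\Gamma$, so $\Gamma$ itself is solvable; but a nontrivial solvable simple group is abelian and therefore cyclic of prime order, contradicting infiniteness. Thus the embedding case with $\Gamma$ infinite cannot occur, and in every case $\phi(\Gamma)$ is finite.

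There is no genuine analytic obstacle in this corollary: all of the substance is contained in Theorem~\ref{thm:main}. The one point that requires care — and that I would flag as the crux of the deduction — is the structural input on finitely generated linear groups: Tits' alternative together with the absence of proper finite-index subgroups in infinite simple groups is exactly what upgrades ``simple'' to the linearity hypothesis of the main theorem, ruling out the possibility of a faithful finite-dimensional real representation.
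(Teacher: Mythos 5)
Your proof is correct and uses the same essential ingredients as the paper's: the trivial-or-faithful kernel dichotomy for a simple group, Tits' alternative combined with the amenability of weakly Liouville groups to get virtual solvability, and the absence of proper finite-index subgroups in an infinite simple group to rule out the faithful case. The only difference is organizational: you verify the linearity hypothesis of Theorem~\ref{thm:main} abstractly and then invoke that theorem as a black box, whereas the paper re-runs this group-theoretic endgame directly on the representation into the isometry group of the invariant flat produced in its proof; both are valid.
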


This applies to the uncountable family of such groups established
by Matte Bon \cite{M14} as well as to the simple groups of intermediate
growth that Nekrashevych constructed in \cite{Ne18}. 

Our proof uses the technique of ultralimits of spaces and equivariant
harmonic maps, heavily inspired by Bourdon's paper \cite{Bo12}, combined
with substantial arguments and results from the first author's paper
\cite{Iz23} (much of what is not repeated here, making our paper
shorter than what it would be if all needed details from \cite{Iz23}
were reproduced). The use of random walks might be somewhat surprising.
We think that the present method, in particular the combination of
ultralimits, equivariant harmonic maps, subharmonic functions on the
group, horofunctions and random walks, could have further applicability
in geometric group theory. 

\paragraph*{Acknowledgements: }

We thank Rostislav Grigorchuk, Nicolás Matte Bon, and Tatiana Nagnibeda
for useful remarks. We are also grateful to the Geometry, Groups,
and Dynamics seminar of the ENS Lyon for helpful and encouraging comments.

\section{Ultralimits of metric spaces}

For a detailed exposition on this topic we refer to \cite[Chapter 11]{CSD21}
or \cite{Bo12,Iz23,Ly05}, and for basics on CAT(0)-spaces we refer
to \cite{BH99}. Let $\omega$ denote a non-principal ultrafilter
on $\mathbb{N}$. Let $Y_{j}$ be metric spaces and $p_{j}$ a point
in each $Y_{j}.$ The associated ultralimit metric space $(Y_{\infty},d)$
is defined as follows. First, let $Y^{\infty}$ be all (one-sided)
sequences $y_{j}\in Y_{j}$ such that $d(y_{j},p_{j})$ is bounded
in $j$. Via the triangle inequality, one can therefore define the
following function on $Y^{\infty}$,
\[
d((x_{j}),(y_{j})):=\omega-\lim d(x_{j},y_{j}).
\]

This function is clearly symmetric and satisfies the triangle inequality.
Now we identify all sequences for which $d((x_{j}),(y_{j}))=0.$ The
set of equivalence classes, denoted by $Y_{\infty}$, is thus a metric
space with the same $d$ well-defined on the quotient. It is rather
immediate that if each $Y_{j}$ is a complete CAT(0)-space, then so
is $(Y_{\infty},d)$.

The dimension of a CAT(0)-space $Y$ is defined as follows (see \cite{Kl99}
for various equivalent definitions of dimension for CAT(0)-spaces).
Given a set of points $x_{0},x_{1},...,x_{n}$ in $Y$, one has a
barycentric simplex $\sigma:\Delta_{n}\rightarrow Y$ defined by mapping
the point $(\alpha_{0},...,\alpha_{n})$ in the standard simplex to
the unique minimizer (the \emph{barycenter}), which exists by standard
CAT(0)-space theory, see \cite[Ch. II.2]{BH99}, of the uniformly
convex function 
\[
f(y)=\sum\alpha_{i}d(x_{i},y)^{2}.
\]

The dimension of $Y$ is the greatest number $D$ such that for some
points $x_{0},x_{1},...,x_{D}$ the corresponding barycentric simplex
is non-degenerate. A simplex $\sigma$ is degenerate if $\sigma(\Delta)=\sigma(\partial\Delta)$,
see \cite[Definition 4.7]{Kl99}. We repeat the argument by Lytchak
in \cite[Lemma 11.1]{Ly05} (alternatively see \cite[Proposition 6.10]{Iz23})
to show:
\begin{prop}
\label{prop:dimensions}If the dimensions of $Y_{j}$ is at most $D<\infty,$
then the dimension of $Y_{\infty}$ is at most $D$.
\end{prop}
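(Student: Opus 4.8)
The plan is to argue by contradiction: suppose $\dim Y_\infty \ge D+1$, so there exist points $x_0,\dots,x_{D+1}\in Y_\infty$ whose barycentric simplex $\sigma:\Delta_{D+1}\to Y_\infty$ is non-degenerate. Each $x_i$ is represented by a bounded sequence $(x_i^{(j)})_{j}$ with $x_i^{(j)}\in Y_j$. For each $j$ form the barycentric simplex $\sigma_j:\Delta_{D+1}\to Y_j$ built from the points $x_0^{(j)},\dots,x_{D+1}^{(j)}$. The first key step is to show that the maps $\sigma_j$ converge, $\omega$-almost surely, to $\sigma$ in a suitable sense: for a fixed barycentric coordinate $\alpha=(\alpha_0,\dots,\alpha_{D+1})$, the point $\sigma_j(\alpha)$ is the minimizer of $f_j(y)=\sum_i \alpha_i d(x_i^{(j)},y)^2$, and one checks that the sequence $(\sigma_j(\alpha))_j$ is bounded (since the $x_i^{(j)}$ are), hence defines a point of $Y_\infty$, and that this point is exactly $\sigma(\alpha)$. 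This last identification uses uniform convexity: the functions $f_j$ converge to $f(y)=\sum_i\alpha_i d(x_i,y)^2$ on $Y_\infty$ in a way compatible with ultralimits, and the minimizer of a uniformly convex function is stable under such limits because a near-minimizer is forced to be close to the true minimizer with a modulus independent of $j$.

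The second step is to transfer non-degeneracy. Since $\sigma$ is non-degenerate, there is some $\alpha^\ast$ in the interior of $\Delta_{D+1}$ with $\sigma(\alpha^\ast)\notin\sigma(\partial\Delta_{D+1})$; quantitatively, $\sigma(\alpha^\ast)$ lies at distance $\ge 3\varepsilon>0$ from the compact set $\sigma(\partial\Delta_{D+1})$. Because $\partial\Delta_{D+1}$ is compact and the barycenter map depends continuously on the coordinates (again by uniform convexity, with a modulus depending only on $D$ and on a bound for the $d(x_i^{(j)},x_k^{(j)})$, which is $\omega$-essentially uniform), the convergence $\sigma_j(\alpha)\to\sigma(\alpha)$ can be made uniform in $\alpha$ over $\Delta_{D+1}$ for $\omega$-almost all $j$. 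Hence for $\omega$-almost every $j$ we get $d_{Y_j}\big(\sigma_j(\alpha^\ast),\sigma_j(\partial\Delta_{D+1})\big)\ge \varepsilon>0$, so $\sigma_j$ is non-degenerate, forcing $\dim Y_j\ge D+1$. Since the set of such $j$ is $\omega$-large, in particular non-empty, this contradicts the hypothesis $\dim Y_j\le D$, and the proposition follows.

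I expect the main obstacle to be making the convergence $\sigma_j(\alpha)\to\sigma(\alpha)$ genuinely \emph{uniform} in $\alpha$, together with the clean handling of the two interchanged limits (the ultralimit over $j$ and the limit in the barycenter construction). The pointwise statement is a routine uniform-convexity estimate, but to conclude non-degeneracy at level $j$ one really needs control over $\sigma_j$ on all of $\partial\Delta_{D+1}$ simultaneously; this is where one invokes that the modulus of convexity for the functions $f_j$ is uniform once $\omega$-many of the mutual distances $d(x_i^{(j)},x_k^{(j)})$ are bounded, which holds since these distances $\omega$-converge to the finite numbers $d(x_i,x_k)$. A secondary point to handle carefully is the degenerate edge case where $\sigma$ itself might be degenerate for trivial reasons (e.g.\ if several $x_i$ coincide in $Y_\infty$), but that only makes $\dim Y_\infty$ smaller, so it does not affect the upper bound. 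Everything else — boundedness of the representing sequences, well-definedness in $Y_\infty$, CAT(0)-ness of $Y_\infty$ — is already in place from the preceding discussion.
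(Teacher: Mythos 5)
Your argument is correct and is essentially the paper's proof: represent the points by sequences, identify $\sigma$ as the ultralimit of the simplices $\sigma_j$ via stability of barycenters under ultralimits, and transfer non-degeneracy using continuity of the barycenter map together with compactness of $\partial\Delta$. The paper states the last step in contrapositive form (``if all the $\sigma_j$ were degenerate then so would $\sigma$ be'') and omits the quantitative uniformity details you supply, but the underlying mechanism is the same.
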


\begin{proof}
Let $x_{0},x_{1},...,x_{n}$ be points in $Y_{\infty}$ such that
the associated barycentric simplex $\sigma$ is not degenerate. Represent
these classes by actual sequences of points in $Y$, denoted $z_{i,j}$,
here $j\rightarrow\infty$. By the continuity of barycenters in CAT(0)-spaces
and the definition of barycentric simplices implies that $\sigma$
is the ultralimit of the simplices defined by $z_{i,j}$ in $Y_{j}$.
If all of these simplices would be degenerate then so would $\sigma$.
\end{proof}

\section{Existence of equivariant harmonic maps with positive energy when
passing to certain ultralimits}

Let $\Gamma$ be a group generated by a finite set $S$. Assume that
$\Gamma$ acts by isometry on a complete metric space $Y$, that is,
there is a homomorphism $\rho:\Gamma\rightarrow Isom(Y)$. A \emph{($\rho$-)equivariant
map }is a map $f:\Gamma\rightarrow Y$ such that 
\[
f(g)=\rho(g)f(e)
\]
for all $g\in\Gamma.$ These maps are determined by $f(e)$ and hence
are just the orbit maps, $g\mapsto\rho(g)f(e)$. 

Let $\mu$ be a symmetric probability measure on $\Gamma$ whose support contains $S$. We further assume that $\mu$ has finite second moment with respect to $\rho$: \[  \sum_{\Gamma} d(f(e),f(g))^2 \mu(g)<\infty.  \] This property does not depend on the choice of an equivariant map $f$.  Consider the following \emph{($\mu$-)energy} of maps $f:\Gamma\rightarrow Y$,
\[
E(f)=\sum_{\Gamma}d(f(e),f(g))^{2}\mu(g),
\]
which clearly is finite by the finite second moment assumption. We
are trying to find a map with minimal energy among the maps which
are equivariant. Such maps are called \emph{harmonic. }Intuitively
we want to place our orbit in a minimal position, which is a generalization
of the consideration of the infimal displacement for single isometries. 

When $Y$ is a complete CAT(0)-spaces it is well-known, and already
used in the previous section about barycenters, that a convex function
such as 
\[
x\mapsto\sum_{\Gamma}d(x,\rho(g)x)^{2}\mu(g),
\]
which is the energy of the map defined by $f(e)=x$, either has a
minimum in $Y$ or a minimizing sequence $p_{j}$ that escapes to
infinity, that is $d(x,p_{j})\rightarrow\infty$ for any $x\in Y$. 

In order to deal with the second scenario, we assume that $\Gamma$
acts on a finite-dimensional CAT(0)-space $Y$ without a global fixed
point. We now follow Bourdon's techniques for $L^{p}$-spaces in \cite{Bo12}.
Let

\[
\delta(x)=\max_{s\in S}d(x,s(x)),
\]
where we from now on suppress $\rho$ in the notation. Later on, we
want $\inf_{x\in Y}\delta(x)>0$, which clearly would imply that 
\[
\inf E(f)\geq\inf_{x\in Y}\delta(x)^{2}\cdot\min_{s\in S}\mu(s)>0
\]
over all equivariant maps $f$, see below. Since $S$ is a generating
set, the action has a global fixed point in $Y$ if and only if there
exists an equivariant map $f$ such that $E(f)=0$.

Suppose instead that $\inf_{x\in Y}\delta(x)=0$. We will now do the
initial scaling of the action using ultralimits. 

First, we rewrite a lemma that Bourdon attributes to Shalom \cite[Lemma 6.3]{Sh00}. 
\begin{lem}
\label{lem:Shalom}Assume that $\inf_{x\in Y}\delta(x)=0$ but there
is no global fixed point. Then for any integer $n>0$, there exists
$r_{n}>0$ and $v_{n}\in Y$ such that $\delta(v_{n})\leq r_{n}/n$
and for all $v$ with $d(v_{n},v)<r_{n},$ it holds that 
\[
\delta(v)\geq r_{n}/2n.
\]
\end{lem}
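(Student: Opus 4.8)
The plan is to argue by contradiction, using the negation of the statement to run an iteration that would produce a global fixed point. Fix an integer $n>0$ and suppose no pair $(r_n,v_n)$ as in the conclusion exists. Spelling out the negation: for every $r>0$ and every $v\in Y$, either $\delta(v)>r/n$, or there is a point $v'$ with $d(v,v')<r$ and $\delta(v')<r/2n$. Equivalently, whenever $\delta(v)\le r/n$ there is such a $v'$.

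First I would record that $\delta>0$ everywhere: if $\delta(v)=0$ then $s(v)=v$ for all $s\in S$, and since $S$ generates $\Gamma$ this makes $v$ a global fixed point, contrary to hypothesis. Then I would build a sequence $(v_k)_{k\ge 0}$. Start from any $v_0\in Y$ (one could use $\inf_x\delta(x)=0$ to make $\delta(v_0)$ small, but this is not actually needed). Given $v_k$ with $\delta(v_k)>0$, set $r_k:=n\,\delta(v_k)>0$; since then $\delta(v_k)=r_k/n$, the negated statement provides $v_{k+1}$ with $d(v_k,v_{k+1})<r_k$ and $\delta(v_{k+1})<r_k/2n=\delta(v_k)/2$. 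As $\delta(v_{k+1})>0$, the construction continues indefinitely.

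Next I would collect the bounds: inductively $\delta(v_k)<\delta(v_0)/2^{k}$, hence $d(v_k,v_{k+1})<r_k=n\,\delta(v_k)<n\,\delta(v_0)/2^{k}$, so $\sum_{k\ge 0}d(v_k,v_{k+1})<2n\,\delta(v_0)<\infty$ and $(v_k)$ is Cauchy; by completeness of $Y$ it converges to some $v_\infty$. Finally, each map $x\mapsto d(x,s(x))$ is $2$-Lipschitz because $s$ is an isometry, so $\delta=\max_{s\in S}d(\cdot,s(\cdot))$ is continuous and $\delta(v_\infty)=\lim_k\delta(v_k)=0$. Thus $v_\infty$ is a global fixed point, the desired contradiction.

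The argument is elementary; the only steps needing care are writing the negation correctly and the geometric-series bookkeeping that makes $(v_k)$ Cauchy. Notably this lemma uses neither the CAT(0) condition nor finite-dimensionality, only completeness of $Y$ and that $S$ generates $\Gamma$ — those hypotheses enter in the later sections.
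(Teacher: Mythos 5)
Your proof is correct and is essentially the same argument as the paper's: both iterate a "find a nearby point where $\delta$ has halved" step, observe the resulting distances form a convergent geometric series, and use completeness plus the $2$-Lipschitz continuity of $\delta$ to produce a global fixed point, contradicting the hypothesis. The only cosmetic differences are that you phrase it as a contradiction with radii $r_k=n\,\delta(v_k)$ adapted to the current point rather than the paper's fixed dyadic radii $2^{-k}$, and you correctly note in passing that the hypothesis $\inf_x\delta(x)=0$ is not actually needed for this lemma.
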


\begin{proof}
Let $n$ be a positive integer. Take $w_{1}\in Y$ such that $\delta(w_{1})\leq1/2n.$
If $\delta(v)\ge1/4n$ for every $v$ with $d(v,w_{1})<1/2$, we let
$v_{n}=w_{1}$ and $r_{n}=1/2$. Otherwise, there is $w_{2}$ with
$d(w_{1},w_{2})<1/2$ and $\delta(w_{2})\leq1/4n$. If $\delta(v)\geq1/8n$
for every $v$ with $d(v,w_{2})<1/4$, then we let $v_{n}=w_{2}$
and $r_{n}=1/4$. This procedure terminates, because otherwise we
would get a sequence $w_{k}$ such that $d(w_{k},w_{k+1})<2^{-k}$.
By completeness of $Y$ this Cauchy sequence converges to a point
$w\in Y$. But note that 
\[
\left|d(w,s(w))-d(w_{k},s(w_{k}))\right|\leq2d(w,w_{k}),
\]
which implies that $\delta(w)=0$ since $\delta(w_{k})\leq1/(2^{k}n)$.
This contradicts the assumption that the orbit did not have a global
fixed point.
\end{proof}
Using this lemma we can now show, what essentially is part of \cite[Prop. 3.1]{Bo12}:
\begin{prop}
\label{prop:delta}Let $\Gamma$ be a group generated by a finite
set $S$. Assume that it acts on a finite-dimensional complete CAT(0)-space
$Y$, with 
\[
\inf_{x\in Y}\max_{s\in S}d(x,s(x))=0
\]
but there is no global fixed point. Then by passing to a certain rescaled
ultralimit action, there is a finite-dimensional complete CAT(0)-space
$Y_{\infty}$ on which $\Gamma$ acts without a global fixed point
and 
\[
\inf_{x\in Y_{\infty}}\max_{s\in S}d(x,s(x))>0.
\]
\end{prop}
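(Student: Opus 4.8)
The plan is to apply Lemma \ref{lem:Shalom} and rescale $Y$ so that $\delta$ becomes bounded below near the points it provides, and then pass to an ultralimit of the rescaled copies. For each integer $n>0$, fix $r_n>0$ and $v_n\in Y$ as in Lemma \ref{lem:Shalom}, put $\lambda_n=n/r_n$, and let $Y_n$ denote $(Y,\lambda_n d)$ based at $v_n$. Rescaling the metric affects neither completeness nor the CAT(0) condition nor the dimension, so each $Y_n$ is a complete CAT(0)-space of dimension at most $D$ on which $\Gamma$ still acts by isometries, and the displacement function of $Y_n$ is $\delta_{Y_n}=\lambda_n\delta$. Hence $\delta_{Y_n}(v_n)\le\lambda_n r_n/n=1$, while $\delta_{Y_n}(v)\ge\lambda_n r_n/(2n)=1/2$ for every $v$ in the ball of $Y_n$-radius $\lambda_n r_n=n$ about $v_n$. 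The point of the scaling $\lambda_n=n/r_n$ is precisely this: the displacement at the basepoint stays bounded, while the region on which $\delta$ is bounded below by $1/2$ grows to radius $n\to\infty$.

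Next I would take $Y_\infty$ to be the ultralimit of the pointed spaces $(Y_n,\lambda_n d,v_n)$ as constructed in Section 2; it is a complete CAT(0)-space, and Proposition \ref{prop:dimensions} gives that its dimension is at most $D$. To see that $\Gamma$ acts on $Y_\infty$ one checks that the induced map $g\cdot[(y_j)]=[(g\,y_j)]$ is well defined, which amounts to the admissible (bounded) sequences being preserved: writing $g$ as a product of $k$ elements of $S$ and using that partial products act by isometries, $d(g\,v_j,v_j)\le k\,\delta(v_j)\le k\,r_j/j$, so $\lambda_j d(g\,y_j,v_j)\le\lambda_j d(y_j,v_j)+k$ remains bounded in $j$. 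Since each $g$ acts isometrically on each $Y_j$, the induced maps are isometries of $Y_\infty$ and assemble into a homomorphism $\Gamma\to Isom(Y_\infty)$. Write $v_\infty=[(v_j)]$.

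Finally, the lower bound on $\delta_{Y_\infty}$. Given any $x=[(y_j)]\in Y_\infty$, set $R=d_{Y_\infty}(x,v_\infty)<\infty$. For $\omega$-almost every $j$ one has both $\lambda_j d(y_j,v_j)<R+1$ and $j>R+1$, and then $d(y_j,v_j)<(R+1)r_j/j<r_j$, so Lemma \ref{lem:Shalom} applies and yields $\delta(y_j)\ge r_j/(2j)$, that is, $\lambda_j\delta(y_j)\ge 1/2$. Since $S$ is finite, the finite maximum commutes with the ultralimit, so $\delta_{Y_\infty}(x)=\max_{s\in S}d_{Y_\infty}(x,s(x))=\omega\text{-}\lim\,\lambda_j\delta(y_j)\ge 1/2$. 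Therefore $\inf_{x\in Y_\infty}\max_{s\in S}d(x,s(x))\ge 1/2>0$, and in particular $\Gamma$ has no global fixed point in $Y_\infty$, since any fixed point $x$ would satisfy $s(x)=x$ for all $s\in S$ and hence $\delta_{Y_\infty}(x)=0$. The step I expect to require the most care is this last one: the lower bound of Lemma \ref{lem:Shalom} holds only inside a ball of fixed radius $r_j$ in the original metric, and one must argue that it nonetheless propagates to \emph{every} point of the ultralimit. It is exactly the growth $\lambda_j r_j=j\to\infty$ of these balls after rescaling, together with the fact that $j>R+1$ holds for $\omega$-almost all $j$, that makes this go through.
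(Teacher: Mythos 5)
Your proof is correct and follows essentially the same route as the paper: rescale by $\lambda_n=n/r_n$ using the points from Lemma \ref{lem:Shalom}, pass to the ultralimit of $(Y,\lambda_n d,v_n)$, invoke Proposition \ref{prop:dimensions} for finite-dimensionality, and use the growing radius $\lambda_n r_n=n$ to propagate the lower bound $1/2$ to every point of $Y_\infty$. You merely spell out two details the paper leaves implicit (well-definedness of the induced action and the $\omega$-almost-every argument for the lower bound), both correctly.
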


\begin{proof}
Let $\lambda_{n}=n/r_{n}$ with $r_{n}$ and $v_{n}$ from Lemma \ref{lem:Shalom}
with $\delta(x)=\max_{s\in S}d(x,s(x))$ whose infimum is zero. Consider
the metric spaces $\lambda_{n}Y=(Y,\lambda_{n}d)$, which of course
again is a complete CAT(0) on which $\Gamma$ acts by isometry via
the same action. Take the ultralimit space $Y_{\infty}$ of the sequence
of pointed spaces $(\lambda_{n}Y,v_{n})$ as in the previous section.
Thanks to Proposition \ref{prop:dimensions} it is still finite-dimensional.
Let
\[
\delta_{n}(x)=\max_{s\in S}\lambda_{n}d(x,s(x)),
\]
so $\delta_{n}(v_{n})\leq1$ and $\delta_{n}(v)\geq1/2$ for all $v$
in the $\text{\ensuremath{\lambda_{n}d}}$-ball of radius $n$ around
$v_{n}$. Thanks to the condition $\delta_{n}(v_{n})\leq1$ we have
that $\Gamma$ still acts (by isometry) on the ultralimit and thanks
to the inequality $\delta_{n}(v)\geq1/2$ on larger and larger balls
around $v_{n}$ we have that this new action of $\Gamma$ on $Y_{\infty}$
satisfy $\inf_{x\in Y_{\infty}}\delta(x)>0.$ 
\end{proof}
In summary, possibly by replacing $Y$ with an ultralimit we have
that $\Gamma$ acts on a finite dimensional CAT(0)-space $Y$ such
that
\[
\delta(x)=\max_{s\in S}d(x,s(x))
\]
is bounded away from $0$, which of course in particular excludes
the existence of a global fixed-point. This implies that the energy
of any equivariant map $f$ is also bounded away from $0$, since
\[
E(f)=\sum_{\Gamma}d(f(e),f(g))^{2}\mu(g)=\sum_{\Gamma}d(f(e),gf(e))^{2}\mu(g)\geq\delta(f(e))^{2}\min_{s\in S}\mu(s).
\]

Now we pass to the other part of \cite[Prop. 3.1]{Bo12}:
\begin{prop}
\label{prop:harmonic}Let $\Gamma$ be a group generated by a finite
set $S$, and $\mu$ a symmetric probability measure with finite second
moment whose support contains $S$. Assume that it acts on a finite-dimensional
complete CAT(0)-space $Y$ without a global fixed point. Then there
is a finite-dimensional complete CAT(0)-space $Z$ on which $\Gamma$
acts by isometry, such that there is a harmonic equivariant map $h:\Gamma\rightarrow Z$
with strictly positive energy.
\end{prop}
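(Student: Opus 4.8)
The plan is to reduce first, by Proposition~\ref{prop:delta}, to the situation analysed just before the statement: if there is no global fixed point and $\inf_{x\in Y}\delta(x)=0$, replace $Y$ by the rescaled ultralimit produced there, which is again a finite-dimensional complete CAT(0)-space with no global fixed point, so that we may assume from the outset that $\delta(x)=\max_{s\in S}d(x,s(x))$ is bounded below on $Y$ by a positive constant. Put $\mathcal{E}_0=\inf E(f)$, the infimum of the energy over all equivariant maps $f:\Gamma\to Y$ (equivalently, the infimum over $x\in Y$ of $\sum_\Gamma d(x,gx)^2\mu(g)$); the bound $E(f)\ge\delta(f(e))^2\min_{s\in S}\mu(s)$ recorded above gives $\mathcal{E}_0>0$. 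If this infimum is attained at some $p\in Y$, take $Z=Y$ and let $h$ be the orbit map through $p$: it is then harmonic with $E(h)=\mathcal{E}_0>0$, and we are done.

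Otherwise, pick a minimizing sequence $p_j\in Y$ with $E(f_j)\to\mathcal{E}_0$, where $f_j$ is the orbit map through $p_j$; we may assume $E(f_j)\le\mathcal{E}_0+1$, so that $\delta(p_j)^2\le C:=(\mathcal{E}_0+1)/\min_{s\in S}\mu(s)$ for every $j$, and consequently $d(p_j,gp_j)\le|g|\,\delta(p_j)\le|g|\sqrt{C}$ for every $g\in\Gamma$, where $|g|$ denotes word length. Form the pointed ultralimit $Z=Y_\infty$ of the spaces $(Y,p_j)$ as in Section~2. It is a complete CAT(0)-space, finite-dimensional by Proposition~\ref{prop:dimensions}, and the uniform bound just noted shows that $g\cdot[(y_j)]=[(gy_j)]$ is a well-defined isometric action of $\Gamma$ on $Z$. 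Let $h:\Gamma\to Z$ be the equivariant map with $h(e)=[(p_j)]$.

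It remains to verify that $h$ is harmonic with positive energy. Let $f':\Gamma\to Z$ be any equivariant map and write $f'(e)=[(q_j)]$ with $q_j\in Y$; by the definition of the ultralimit $R:=\sup_j d(p_j,q_j)<\infty$, whence $\delta(q_j)\le\delta(p_j)+2d(p_j,q_j)\le\sqrt{C}+2R$ and $d(q_j,gq_j)^2\le(\sqrt{C}+2R)^2|g|^2$ uniformly in $j$. Since $\mu$ has finite second moment, $\sum_g|g|^2\mu(g)<\infty$, so we may interchange the ultralimit with the sum over $\Gamma$ (split off a finite subset of $\Gamma$ carrying almost all the $\mu$-mass, where the $\omega$-limit of a finite sum is the sum of the $\omega$-limits, and bound the tail uniformly):
\[
E_Z(f')=\sum_{g}\bigl(\omega\text{-}\lim_j d(q_j,gq_j)^2\bigr)\mu(g)=\omega\text{-}\lim_j\sum_{g}d(q_j,gq_j)^2\mu(g)=\omega\text{-}\lim_j E(f_{q_j})\ge\mathcal{E}_0,
\]
since each $f_{q_j}$ is an equivariant map into $Y$. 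The same computation with $q_j=p_j$ gives $E_Z(h)=\omega\text{-}\lim_j E(f_j)=\mathcal{E}_0$. Hence $h$ minimises the energy among equivariant maps $\Gamma\to Z$, i.e.\ is harmonic, and $E_Z(h)=\mathcal{E}_0>0$. The one delicate point is precisely this interchange of the ultralimit with the summation over $\Gamma$: it is here that the finite second moment of $\mu$ is used, together with the observation that along a minimizing sequence $\delta(p_j)$, and hence each displacement $d(\,\cdot\,,g\,\cdot\,)\le|g|\,\delta(\,\cdot\,)$, stays uniformly bounded; the stability of finite-dimensionality and of the CAT(0) property under ultralimits, and the convexity of the energy functional, are standard.
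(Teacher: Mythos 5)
Your proof follows the same route as the paper: reduce via Proposition~\ref{prop:delta} to the case $\inf_Y\delta>0$, take a minimizing sequence $p_j$ for the energy, pass to the pointed ultralimit $Z=\omega\text{-}\lim(Y,p_j)$, and show the limit orbit map is harmonic by identifying the $Z$-energy of any equivariant map with the $\omega$-limit of the energies of approximating orbit maps into $Y$. You are in fact more careful than the paper about the one genuinely delicate step, the interchange of the $\omega$-limit with the summation over $\Gamma$ (which the paper asserts without comment); the only caveat is that your domination $d(q_j,gq_j)^2\le(\sqrt{C}+2R)^2|g|^2$ uses $\sum_g|g|^2\mu(g)<\infty$, i.e.\ finite second moment for the \emph{word} metric, which is formally stronger than the second moment with respect to $\rho$ as defined in Section~3 --- though it is what the measure produced in the appendix actually satisfies, and some hypothesis of this kind appears to be needed to justify the interchange at all.
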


\begin{proof}
In view of Proposition \ref{prop:delta}, by passing to a certain
ultralimit if necessary, we may assume that $\delta(x)=\max d(x,s(x))$
bounded away from $0$ and hence the energy $E(f)$ of equivariant
maps $f$ is also bounded away from $0$ in view of the above displayed
inequality. 

Let $p_{j}$ be a sequence such that the equivariant maps with $f_{j}(e):=p_{j}$
have energy approaching the minimum. As remarked above in case the
the $p_{j}$ stay bounded there is an equivariant harmonic map because
of the uniform convexity property of complete CAT(0)-spaces. In any
case, we can argue as follows, consider the $\omega$-ultralimit of
$(Y,p_{j})$. By the above displayed inequality we have $\delta(p_{j})$
is bounded from above, say by twice the square root of the minimal
energy. This guarantees that the $\Gamma$ action extends to this
limit space by isometry, and as before this limit is a complete finite
dimensional CAT(0)-space (in particular in view of Proposition \ref{prop:dimensions}).
Likewise we may also define the map $h:\Gamma\rightarrow Y_{\infty}$
by $h(g)=\omega-\lim f_{j}(g)$. Since for each $j$ this is equivariant
so is $g$. We now claim it is moreover harmonic. Clearly, $E(h)=\omega-\lim E(f_{j})=\inf_{f}E(f).$
To show that this value of the energy is also minimal in the limit
space, take $h_{1}$ an equivariant map $\Gamma\rightarrow Y_{\infty}$
and consider its energy. Take a sequence $w_{j}$ representing $h_{1}(e)$.
Define the corresponding equivariant maps $f_{1,j}(e)=w_{j}.$ For
a given $g\in\Gamma$ let $z_{j}$ be a sequence representing $g(w_{j})$,
by equivariance we now have
\[
0=\omega-\lim d(gw_{j},z_{j})=\omega-\lim d(f_{1,j}(g),z_{j})
\]
which shows that we can represent $h_{1}$ by the maps $f_{1,j}$,
but then it follows that 
\[
E(h_{1})=\omega-\lim E(f_{1,j})\geq\inf_{f}E(f)=E(h)
\]
.
\end{proof}

\section{Proofs of the main theorems}

Let $\Gamma$ be a weakly Liouville group. Then, by definition, $\Gamma$
admits a symmetric probability measure of finite support $S$ that
generates $\Gamma$ and having zero drift. For any isometric action
$\rho:\Gamma\rightarrow Isom(X,d)$, the orbit map $g\mapsto\rho(g)x$
from the group with its word metric to the metric space $(X,d)$ is
Lipschitz. Taken together this implies that 

\[
\ell=\lim_{n\rightarrow\infty}\frac{1}{n}\sum_{g\in\Gamma}d(x,\rho(g)x)\mu^{*n}(g)=0,
\]
where $\mu^{*n}$ are the convolution powers giving the distribution
of the random walk at time $n$. We emphasize that this holds for
any isometric action of $\Gamma$ on a metric space. 

Let $\Gamma$ act by isometry on a finite-dimensional complete CAT(0)-space
$Y$. Assume that $\Gamma$ has no global fixed point in $Y$, and
take any measure with finite second moment so that its support contains
the set $S$. Then the ultralimits, in two steps as explained in the
previous section, Propositions \ref{prop:delta} and \ref{prop:harmonic},
give another finite-dimensional complete CAT(0) space $Z$ on which
$\Gamma$ acts by isometry and with the existence of an equivariant
harmonic map with positive energy. Since the energy is strictly positive
there is no fixed point.

As is explained in the appendix we may modify the measure $\mu$ so
that we can assume that its support contains all non-identity elements
of $\Gamma$ keeping the drift $0$. The notion of horofunctions $b_{\xi}$
with respect to a base point $o$ also extends to ultralimits in natural
way (\cite[Lemma 6.6]{Iz23}). We recall the important Proposition
6.9 in \cite{Iz23} slightly tailored to fit the present notations:
\begin{prop}
\label{prop:Prop 6.9}(\cite[Prop. 6.9]{Iz23}) Let $Z$ be a complete
CAT(0)-space. Take $o\in Z$ and set $Z_{\infty}=\omega-\lim(Z,o).$
Let $\Gamma$ be a countable group equipped with a symmetric probability
measure $\mu$ whose support contains $\Gamma\setminus\{e\}.$ Suppose
that $\Gamma$ acts by isometry on $Z$ and that there exists an equivariant
harmonic map $f:\Gamma\rightarrow Z$. If the drift $\ell=0$, then
for almost all random walk trajectories $\sigma$, there exists $\xi(\sigma)\in Z_{\infty}\cup\partial Z_{\infty}$such
that a function $\varphi_{\sigma}:\Gamma\rightarrow\mathbb{R}$ defined
by \begin{equation} \varphi_{\sigma}(\xi)=\begin{cases}d(\xi(\sigma),f(g))-d(\xi(\sigma),f(e)),  &\text{if $\xi(\sigma)\in Z_{\infty}$} \\ b_{\xi(\sigma)}(f(g),f(e)), &\text{if $\xi(\sigma)\in \partial Z_{\infty}$}\end{cases}\end{equation}is
$\mu$-harmonic. 
\end{prop}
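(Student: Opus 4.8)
The plan is to prove the single, unified statement that for almost every trajectory $\sigma=(\sigma_{n})$ the function
\[
\varphi_{\sigma}(g)=\omega\text{-}\lim_{n}\bigl(d(f(\sigma_{n}),f(g))-d(f(\sigma_{n}),f(e))\bigr),
\]
with $\xi(\sigma)$ recording the position of $\omega\text{-}\lim_{n}f(\sigma_{n})$ in $Z_{\infty}\cup\partial Z_{\infty}$ (so that $\varphi_{\sigma}$ coincides with the two expressions in the statement), is $\mu$-harmonic. The two inputs from CAT(0)-geometry I would set up first are: (i) harmonicity of $f$ is exactly the Euler--Lagrange equation ``$f(g)$ is the barycenter of $\sum_{h}\mu(h)\delta_{f(gh)}$ for every $g$'', which follows from the balancing condition at $e$ together with equivariance of barycenters; and (ii) Jensen's inequality for barycenters in a complete CAT(0)-space applied to the convex function $z\mapsto d(\zeta,z)$, giving $d(\zeta,f(g))\le\sum_{h}\mu(h)d(\zeta,f(gh))$ for every $\zeta\in Z$ and, in the ultralimit, for every $\zeta\in Z_{\infty}\cup\partial Z_{\infty}$ (using the extension of horofunctions to ultralimits, \cite[Lemma 6.6]{Iz23}). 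This makes each $\varphi_{\sigma}$ $\mu$-subharmonic (the $\omega$-limit being taken term by term, legitimate because $|d(f(\sigma_{n}),f(gh))-d(f(\sigma_{n}),f(g))|\le d(f(e),hf(e))$ and $\sum_{h}\mu(h)d(f(e),hf(e))<\infty$ by the finite second moment), and it gives that the ``Laplacian defect'' $D_{\sigma}(g):=\sum_{h}\mu(h)\varphi_{\sigma}(gh)-\varphi_{\sigma}(g)$ satisfies $0\le D_{\sigma}(g)\le\sum_{h}\mu(h)d(f(e),hf(e))$, a bound uniform in $g$ and $\sigma$.

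The heart is upgrading subharmonicity to harmonicity, i.e.\ $D_{\sigma}\equiv 0$ for a.e.\ $\sigma$, and this is where the hypothesis $\ell=0$ enters. Applying the variance inequality to $f(\sigma_{n})=\mathrm{bar}\sum_{h}\mu(h)\delta_{f(\sigma_{n}h)}$ with $\zeta=f(e)$ gives $\mathbb{E}[d(f(e),f(\sigma_{n+1}))^{2}\mid\mathcal F_{n}]\ge d(f(e),f(\sigma_{n}))^{2}+E(f)$, so the orbit escapes in $L^{2}$; combined with the $1$-Lipschitz orbit map and $\ell=0$ (whence $\mathbb{E}[d(f(e),f(\sigma_{n}))]=o(n)$), one gets that for a.e.\ $\sigma$ some subsequence of $f(\sigma_{n})$ tends to a point of $\partial Z_{\infty}$, so $\varphi_{\sigma}$ genuinely has the stated horofunction form. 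Then, writing $\sigma_{n}=\omega_{1}\cdot(\theta\sigma)_{n-1}$ for the shift $\theta$ and using equivariance, one obtains the cocycle identity $\varphi_{\sigma}(g)=\varphi_{\theta\sigma}(\omega_{1}^{-1}g)-\varphi_{\theta\sigma}(\omega_{1}^{-1})$, hence $D_{\sigma}(g)=D_{\theta\sigma}(\omega_{1}^{-1}g)$. Integrating over $\sigma$, the function $\overline D(g):=\mathbb{E}_{\sigma}[D_{\sigma}(g)]$ is non-negative, bounded by the uniform bound above, and satisfies $\overline D(g)=\sum_{s}\mu(s)\overline D(s^{-1}g)$, i.e.\ it is $\mu$-harmonic (for symmetric $\mu$ the left/right conventions are interchanged by $g\mapsto g^{-1}$, preserving boundedness); since $\Gamma$ is weakly Liouville, $\overline D\equiv c_{0}$ for a constant $c_{0}\ge 0$. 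Running the cocycle identity along an independent walk $(\tau_{n})$ and using $\overline D\equiv c_{0}$ gives $\mathbb{E}_{\sigma,\tau}[\varphi_{\sigma}(\tau_{n})]=\sum_{m<n}\mathbb{E}[\overline D(\tau_{m})]=n\,c_{0}$, whereas $|\varphi_{\sigma}(\tau_{n})|\le d(f(e),f(\tau_{n}))$ forces $\mathbb{E}|\varphi_{\sigma}(\tau_{n})|=o(n)$ by $\ell=0$; therefore $c_{0}=0$, so $\overline D\equiv 0$. As $D_{\sigma}\ge 0$, for each fixed $g$ we get $D_{\sigma}(g)=0$ for a.e.\ $\sigma$, and intersecting over the countable group $\Gamma$ yields that a.e.\ $\sigma$ has $D_{\sigma}\equiv 0$, that is $\varphi_{\sigma}$ is $\mu$-harmonic.

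The step I expect to be the main obstacle is the passage from the subharmonic functions $\varphi_{\sigma}$ to the expectation $\overline D$: one has to make sense of $\sigma\mapsto\varphi_{\sigma}(g)$ as a genuine measurable random variable, which is delicate because $\omega$-limits of measurable sequences are not automatically measurable, and one has to know that $\omega\text{-}\lim_{n}f(\sigma_{n})$ really determines a point of $Z_{\infty}\cup\partial Z_{\infty}$ for which $\varphi_{\sigma}$ takes the stated form rather than, say, a bounded interior point where subharmonicity is strict. Resolving this — replacing the $\omega$-limit by an honest subsequential limit along the escaping subsequence produced above, working with a shift-compatible (idempotent) ultrafilter, or isolating a full-measure $\theta$-invariant set on which all of these objects are well defined and the cocycle identity is valid pointwise — is precisely where the horofunction-on-ultralimits machinery of \cite{Iz23} is needed; once that is in place, the subharmonicity, the cocycle identity, and the weakly-Liouville-plus-zero-drift squeeze go through as above.
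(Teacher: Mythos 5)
First, a point of comparison: the paper does not prove this proposition at all --- it is imported verbatim from \cite[Prop.~6.9]{Iz23}, and the authors explicitly write that ``the proof requires a few pages'' and defer to that reference. So your proposal has to stand on its own. Its skeleton is the right one and matches Izeki's strategy: the barycenter characterization of harmonicity of $f$ (valid for symmetric $\mu$) plus Jensen's inequality makes $g\mapsto d(\zeta,f(g))-d(\zeta,f(e))$ $\mu$-subharmonic for every $\zeta$, hence also after passing to the limit over $n$; the hypothesis $\ell=0$ must then kill the nonnegative harmonicity defect in the mean; and nonnegativity upgrades this to almost-sure harmonicity.

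The two steps you flag as delicate are, however, exactly where your mechanism fails, and the fixes you suggest do not exist. (i) The cocycle identity $\varphi_{\sigma}(g)=\varphi_{\theta\sigma}(s_1^{-1}g)-\varphi_{\theta\sigma}(s_1^{-1})$ (with $s_1$ the first increment) needs $\omega\text{-}\lim_n a_{n+1}=\omega\text{-}\lim_n a_n$ for bounded sequences, i.e.\ a shift-invariant ultrafilter on $\mathbb{N}$; none exists (test it on the set of even integers), and idempotents of $\beta\mathbb{N}$ satisfy a double-limit identity, not this one. Since the harmonicity of $\overline D$ rests entirely on this identity, the argument collapses here. (ii) An $\omega$-limit of measurable functions of $\sigma$ is in general non-measurable (the $\omega$-limit of i.i.d.\ coin flips is the standard example), so $\overline D(g)=\mathbb{E}_{\sigma}[D_{\sigma}(g)]$ is not even defined. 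Both problems should be avoided by never taking the $n$-limit of the cocycle: the defect of the pre-limit function $F_n(\sigma,g)=d(f(\sigma_n),f(g))-d(f(\sigma_n),f(e))$ at $g$ equals $\psi(\sigma_n^{-1}g)$, where $\psi(g)=\sum_h\mu(h)\,d(f(e),f(gh))-d(f(e),f(g))\geq 0$, and $\mathbb{E}[\psi(\sigma_n^{-1})]=L^{n+1}-L^{n}$ telescopes to $L^{N}=o(N)$ by $\ell=0$; the bound $\mu^{*n}*\delta_g\leq\mu(g)^{-1}\mu^{*(n+1)}$ (this is where $\mathrm{supp}\,\mu\supset\Gamma\setminus\{e\}$ enters) then controls the defect at every $g$. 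Even after this repair, passing from ``the Ces\`aro means of $\mathbb{E}[\psi(\sigma_n^{-1}g)]$ vanish'' to ``the $\omega$-limit of $\psi(\sigma_n^{-1}g)$ vanishes almost surely'' requires a careful choice of the ultrafilter and a Fatou-type argument; that is precisely the content of the ``few pages'' in \cite{Iz23}, not a routine technicality. Finally, your appeal to the Liouville property is not licensed by the hypotheses (only $\ell=0$ for this particular action is assumed) and is also unnecessary: once $\overline D\geq 0$, $\overline D(e)=0$ and one step of harmonicity are available, the full-support assumption on $\mu$ already forces $\overline D\equiv 0$.
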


The last notion of $\mu$-harmonic means that the Laplacian defined
by $\mu$ annihilates $\varphi_{\sigma}.$ The proof requires a few
pages and since it can be used in a form directly applicable in the
present paper, we refer to \cite{Iz23} for the proof. 

Using this proposition one can show the following:
\begin{prop}
\label{prop:Prop9} Let $Z$ be a complete CAT(0)-space of finite
dimension and $\Gamma$ be a countable group equipped with a symmetric
probability measure $\mu$ whose support contains $\Gamma\setminus\{e\}.$
Suppose that $\Gamma$ acts on $Z$ by isometry and that there exists
an equivariant harmonic map $f:\Gamma\rightarrow Z$. If the drift
$\ell=0$, then there is a $\Gamma$-invariant convex subset $F$
isometric to a finite dimensional (possibly zero-dimensional) Euclidean
space.
\end{prop}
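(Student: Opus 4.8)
The plan is to exploit the family of harmonic functions $\varphi_\sigma$ produced by Proposition~\ref{prop:Prop 6.9} and to use weak-Liouville (i.e.\ the absence of non-constant bounded $\mu$-harmonic functions, which holds here since $\mu$ has zero drift and $\Gamma$ is weakly Liouville) to force strong rigidity on the orbit of the harmonic map $f$. First I would observe that each $\varphi_\sigma$ is $1$-Lipschitz with respect to the word metric (being a difference of distances, or a Busemann cocycle, composed with the Lipschitz orbit map), hence in particular it is a harmonic function of linear growth. The key point is that weak Liouville plus a Poisson-boundary/martingale argument — exactly the mechanism behind Karlsson--Ledrappier-type results — pins down the behaviour of these $\varphi_\sigma$: along almost every trajectory $\sigma$ one has $\varphi_\sigma(\sigma_n)/n \to 0$, and the cocycle property of horofunctions together with harmonicity should force $\varphi_\sigma$ to behave like (the restriction of) a \emph{homomorphism} $\Gamma \to \mathbb{R}$, or at least to be controlled by the space of such homomorphisms.

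The second step is to convert this analytic rigidity into geometric rigidity of $f(\Gamma) \subset Z$ (and of its ultralimit image in $Z_\infty$). Concretely, I would argue that for almost every $\sigma$ the point $\xi(\sigma)$ is a point (or boundary point) at infinity toward which the orbit is ``pushed'' linearly, and that the collection of all such horofunctions $\varphi_\sigma$ separates directions in a flat. The goal is to produce a non-trivial invariant flat: if the harmonic map were not ``flat'', the convexity/uniform-convexity of $d(\cdot,\cdot)^2$ would let one strictly decrease energy by moving $f(e)$ slightly in a direction singled out by the $\varphi_\sigma$'s — contradicting harmonicity — unless the orbit already lies in a Euclidean subspace where such variations are energy-neutral. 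This is where finite-dimensionality is essential: it guarantees (via the barycenter/simplex machinery of Section~2 and results from \cite{Iz23}) that the set of ``flat directions'' is itself finite-dimensional and assembles into a genuine isometrically embedded Euclidean factor rather than an infinite-dimensional or degenerate object. The minimal such $F$ would be the intersection of the relevant half-spaces / the common min-set of the convex functions attached to the harmonic data, and $\Gamma$-invariance follows because the harmonic map is equivariant and the construction of $\varphi_\sigma$ is natural under the action.

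The main obstacle I anticipate is the passage from ``$\varphi_\sigma$ is harmonic and Lipschitz'' to ``$f(\Gamma)$ lies in a flat'': harmonic maps into CAT(0) spaces do not, in general, have flat image, so one genuinely needs to use that \emph{all} the $\varphi_\sigma$ are harmonic simultaneously and that $\Gamma$ is weakly Liouville — the latter turning every bounded harmonic function constant and thereby killing the ``oscillating'' part of the $\varphi_\sigma$, leaving only affine-type behaviour. Making the splitting $Z \supset F \times (\text{something})$, or at least extracting $F$ as an invariant sub-building-block, will require the CAT(0) splitting theorems together with the finite-dimensionality input to rule out pathologies; I expect to invoke \cite{CL10}-style results or the flat-strip / product-decomposition arguments, applied after the ultralimit reductions already performed. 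A secondary technical point is handling the dichotomy in the definition of $\varphi_\sigma$ (whether $\xi(\sigma)$ lands in $Z_\infty$ or in $\partial Z_\infty$) uniformly; in the first case $\varphi_\sigma$ is literally a difference of distance functions to an interior point, which one must still show yields a flat direction, presumably by a further rescaling/ultralimit sending that point to infinity.
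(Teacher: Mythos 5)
There is a genuine gap: the central step --- passing from ``$\varphi_\sigma$ is $\mu$-harmonic'' to ``$f(\Gamma)$ spans an invariant flat in $Z$'' --- is left as a heuristic, and the two mechanisms you propose for it do not work as stated. The Liouville/weak-Liouville property concerns \emph{bounded} harmonic functions, whereas $\varphi_\sigma$ has linear growth, so it cannot ``kill the oscillating part'' of $\varphi_\sigma$; and nothing forces $\varphi_\sigma$ to be a homomorphism (nor is that what is needed). The argument the paper actually runs is different: the pullback of a convex function (a horofunction) by a harmonic map is $\mu$-subharmonic, so harmonicity of $\varphi_\sigma$ means the convexity of the horofunction degenerates along the orbit; this is converted, via \cite[Lemma 5.1]{Iz23}, into a family of mutually parallel bi-infinite geodesics $c_g$ through the points $f(g)$ ending at a common boundary point, and then the splitting theorem \cite[Ch.~II.2]{BH99} produces a product decomposition of the convex hull of $f(\Gamma)$ with a flat factor along which the action splits; one then iterates on the complementary factor to obtain a fixed point there, and $F$ is the product of the maximal flat factor with that fixed point. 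Your proposed variational argument (perturbing $f(e)$ to decrease energy) is a plausible intuition for why harmonicity forces flatness, but it is not carried out and is not obviously salvageable without essentially redoing the parallel-geodesic/splitting argument.

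Two further concrete omissions. First, you never use finite-dimensionality where it is actually needed: the point $\xi(\sigma)$ furnished by Proposition~\ref{prop:Prop 6.9} a priori lies in $Z_\infty\cup\partial Z_\infty$, and it is the compactness principle \cite[Theorem 6.12]{Iz23} (valid because $\dim Z<\infty$) that brings it down to $\partial Z$. Without that step any flat you construct lives in the ultralimit $Z_\infty$, not in $Z$, and you do not obtain a $\Gamma$-invariant subset of $Z$ as the proposition requires. Second, the case $\xi(\sigma)\in Z_\infty$ is much simpler than your proposed ``further rescaling sending that point to infinity'': a $\Gamma$-fixed point in $Z_\infty$ forces all orbits in $Z$ to be bounded, and the Bruhat--Tits fixed point theorem then gives a global fixed point in $Z$, i.e.\ a zero-dimensional $F$, which the statement explicitly allows. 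Your treatment of that case would instead reintroduce a boundary point by an unjustified limiting procedure.
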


\begin{proof}
Although this assertion is not explicitly stated in \cite{Iz23},
it can be proved by following the arguments in \cite[§6.4]{Iz23}
appropriately in our setting. We sketch the outline here. 

Suppose $\xi(\sigma) \in Z_{\infty}$ in Proposition \ref{prop:Prop 6.9}.   Then, as in the first part of \cite[\S 6.4]{Iz23}, there is a point in  $Z_{\infty}$ fixed by $\Gamma$, and hence every orbit of $\Gamma$ in  $Z_{\infty}$ is bounded.  In particular, any orbit of $\Gamma$ in $Z$ is bounded, and this implies  the existence of a global fixed point, an invariant zero-dimensional  Euclidean space, in $Z$ via the Bruhat-Tits fixed point theorem, \cite[II.2.8]{BH99}.  

If $\xi(\sigma) \in \partial Z_{\infty}$ in Proposition \ref{prop:Prop  6.9}, then the horofunction   $\varphi_{\sigma}: g\mapsto b_{\xi(\sigma)}(f(g),f(e))$ is  $\mu$-harmonic.    Since horofunctions on CAT(0) spaces are convex, the  pull-back of them by a harmonic map are $\mu$-subharmonic functions on  $\Gamma$ in general (see for example \cite[Prop.~2.17]{Iz23}).   Here a function $u$ on $\Gamma$ is $\mu$-subharmonic if the Laplacian  of $u$ defined by $\mu$ is nonpositive. If $u$ is the pull-back of a  convex function on a CAT(0) space by a harmonic map, then the absolute  value of its Laplacian measures the strength of the convexity of the  function around the image of the harmonic map.  Thus the fact $\varphi_{\sigma}:g\mapsto b_{\xi(\sigma)}(f(g),f(e))$ is  $\mu$-harmonic means that the horofunction has the weakest convexity  around the image $f(\Gamma)$ of $f$. This suggests that the convex  hull of $f(\Gamma)$ is flat,  and it can be proved as follows.  

Since we assume that $Z$ is finite-dimensional, a compactness principle  \cite[Theorem 6.12]{Iz23} tells us that $\xi(\sigma)$ actually lies in  $\partial Z$.   The compactness principle and \cite[Lemma 5.1]{Iz23} imply that there exist $\xi \in \partial Z$ and a bi-infinite  geodesic $c_g$ passing through $f(g)$ and terminating at $\xi$ for each  $g\in \Gamma$, and all the $c_g$ geodesics are parallel to each other (in order to prove this,  we  use the weakest convexity of the horofunction).   Then an argument involving the splitting theorem, see \cite[Ch. II.2]{BH99}, leads us to see that  there is a splitting of the convex hull of $f(\Gamma)$ one of whose  factor is a Hilbert space, and the action of $\Gamma$ also splits along this splitting.  Taking a maximal flat factor in this splitting, and  applying the argument above to the other factor, we see that the action  of $\Gamma$ on the other factor must fix a point; the action leaves the  product $F$ of the maximal flat factor and the fixed point in the  other factor invariant.   Since $Z$ is finite-dimensional, so is $F$, and it becomes the desired  flat subset in $Z$.  
\end{proof}
Recall now that we have excluded the existence of a global fixed point
in $Z$ in the preceding section. Therefore, by Proposition \ref{prop:Prop9},
we obtain a representation of $\Gamma$ into the group of affine isometries
of a Euclidean space of finite and strictly positive dimension. Thus
we obtain a linear representation of $\Gamma$ over the real numbers.
By the assumption of Theorem \ref{thm:main}, the image of this representation is finite. Hence by the Cartan-Bruhat-Tits fixed point theorem it must fix a point in this flat. This leads to a contradiction to the construction of the space $Z$ from Proposition \ref{prop:harmonic}. The conclusion is then that there must be a global fixed point for the original action, $\rho:\Gamma\rightarrow Isom(Y)$ which concludes the proof of Theorem \ref{thm:main}.

In the case of a finitely generated torsion group, by Schur's theorem
the linear representation above must have a finite image. Hence by
the Cartan-Bruhat-Tits fixed point theorem it must fix a point in
this flat. This leads to a contradiction to the construction of the
space $Z$ from Proposition \ref{prop:harmonic}. The conclusion is
then that there must be a global fixed point for the original action,
$\rho:\Gamma\rightarrow Isom(Y)$ which concludes the proof of Theorem
\ref{thm:torsion}.

In the case $\Gamma$ is a simple group that is weakly Liouville,
then the linear representation above must either be trivial or faithful.
The former scenario directly contradicts the lack of fixed points,
and in the latter scenario we can invoke the Tits alternative theorem
and conclude that $\Gamma$ must be virtually solvable (since nonamenable
groups always have positive drift of random walks). Let $H$ be a
finite index solvable subgroup of $\Gamma,$ then by considering the
kernel of the action of $\Gamma$ on the coset space $\Gamma/H$ we
must have, by simplicity, that $H=\Gamma$. As a both solvable and
simple group, $\Gamma$ must be the trivial group or a cyclic group
of prime order, and again we have a fixed point in $Z$, leading to
the desired contradiction, which allows us to conclude that there
must be a global fixed point for the original action, $\rho:\Gamma\rightarrow Isom(Y)$.
\begin{rem}
Suppose $\ensuremath{\Gamma=\mathbb{Z}}$ then the drift is surely
$0$ for the simple symmetric random walk. This group may act without
a fixed point, precisely when being a parabolic or hyperbolic isometry
of $Y$. Note that the proof here applies all the way until the point
that it preserves a flat in $Z$ so it has an axis (and the hypothesis
of only possessing finite linear representations is obviously not
satisfied). This clearly does not contradict the no fixed-point assumption,
and is also consistent with the case when $\Gamma$ acts parabolically
(i.e. without an axis) on the original space $Y$.
\end{rem}

\section*{Appendix: the drift of random walks for convex combinations}

Let $\Gamma$ be a countable group and $d$ a left-invariant metric
on $\Gamma$. Given a probability measure $\mu$ on $\Gamma$ with
finite first moment, the \emph{drift }of the random walk defined by
$\mu$ and its convolution powers $\mu^{*n}$ is 
\[
l_{d}(\Gamma,\mu)=\lim_{n\rightarrow\infty}\frac{1}{n}\sum_{g\in\Gamma}d(e,g)\mu^{*n}(g)
\]
which exists by the well-known subadditivity of 
\[
L_{d}^{n}(\Gamma,\mu)=\sum_{g\in\Gamma}d(e,g)\mu^{*n}(g).
\]
See for example \cite[Ch. 3]{La23} for details. We will establish
the following inequality:
\begin{prop}
\label{prop:conv_comb}Consider a convex combination of $\mu^{*n}$, that is,   $\sum_{n=1}^{\infty} a_n\mu^{*n}$, where $a_n\geq 0$ and  $\sum_{n=1}^{\infty}a_n=1$. Then we have  \begin{equation*}  l_d \left(\Gamma, \sum_{n=1}^{\infty}a_n\mu^{*n} \right)  \leq \left(1+\sum_{n=1}^{\infty} n a_n\right) l_d(\Gamma,\mu).  \end{equation*} 
\end{prop}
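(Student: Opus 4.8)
The plan is to reduce the claim to two elementary ingredients: an explicit formula for the convolution powers of $\nu:=\sum_{n\ge 1}a_n\mu^{*n}$ in terms of those of $\mu$, and the single fact (recalled in this appendix) that $L_d^m(\Gamma,\mu)/m$ converges to $l_d(\Gamma,\mu)$. Write $\ell:=l_d(\Gamma,\mu)$, $L_m:=L_d^m(\Gamma,\mu)$, and $\bar n:=\sum_{n\ge 1}na_n$. We may assume $\bar n<\infty$; otherwise the right-hand side is infinite and there is nothing to prove.

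First I would expand $\nu^{*k}$. Since a $\nu$-step amounts to running the $\mu$-walk for a random number of steps, distributed as $(a_n)_{n\ge 1}$, one has
\[
\nu^{*k}=\sum_{n_1,\dots,n_k\ge 1}\Big(\prod_{i=1}^{k}a_{n_i}\Big)\,\mu^{*(n_1+\dots+n_k)},
\]
and hence, expanding $L_d^k(\Gamma,\nu)=\sum_g d(e,g)\nu^{*k}(g)$ and interchanging the (absolutely convergent, since $\bar n<\infty$) sums exactly as in the proof of subadditivity of $L^n$,
\[
L_d^k(\Gamma,\nu)=\sum_{n_1,\dots,n_k\ge 1}\Big(\prod_{i=1}^{k}a_{n_i}\Big)\,L_{n_1+\dots+n_k}.
\]
The only combinatorial input needed is the first-moment identity $\sum_{n_1,\dots,n_k}\big(\prod_i a_{n_i}\big)(n_1+\dots+n_k)=k\bar n$, immediate from symmetry and $\sum_n a_n=1$.

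Next I would bound $L_m$ from above by an affine function of $m$ that is asymptotically sharp. Because $L_m/m\to\ell$, for every $\varepsilon>0$ there is a constant $C_\varepsilon$ (take $C_\varepsilon=\max_{m<m_0(\varepsilon)}L_m$, where $m_0(\varepsilon)$ is a threshold past which $L_m/m\le\ell+\varepsilon$; recall $L_m\le mL_1<\infty$) such that $L_m\le(\ell+\varepsilon)m+C_\varepsilon$ for all $m\ge 1$. Substituting this into the formula for $L_d^k(\Gamma,\nu)$ and applying the first-moment identity gives $L_d^k(\Gamma,\nu)\le(\ell+\varepsilon)\,k\bar n+C_\varepsilon$. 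Dividing by $k$, letting $k\to\infty$, and then letting $\varepsilon\downarrow 0$ yields $l_d(\Gamma,\nu)\le\bar n\,\ell$, which is at most $(1+\bar n)\ell$ since $\ell\ge 0$; so the stated inequality follows (in fact with a slightly better constant).

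The step requiring care, rather than any genuine difficulty, is the choice of majorant for $L_m$. The naive route — bounding $L_{n_1+\dots+n_k}\le L_{n_1}+\dots+L_{n_k}$ by pure subadditivity — only gives $l_d(\Gamma,\nu)\le\sum_n a_nL_n$, and $\sum_n a_nL_n$ can genuinely exceed $(1+\bar n)\ell$ (for instance for the simple random walk on $\mathbb Z$, where $\ell=0$ but $L_n\sim\sqrt{2n/\pi}$, so $\sum_n a_nL_n>0$). Replacing subadditivity by the asymptotically sharp affine majorant $L_m\le(\ell+\varepsilon)m+C_\varepsilon$ is precisely what transfers the asymptotic slope $\ell$ of the $\mu$-walk to the $\nu$-walk while absorbing all small-scale behaviour into a constant that is washed out after dividing by $k$.
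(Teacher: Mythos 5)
Your proof is correct, and while it shares the paper's overall skeleton --- expand $\left(\sum_n a_n\mu^{*n}\right)^{*k}$ as a convex combination $\sum_j a_j'\mu^{*j}$, compute the first moment $\sum_j j a_j'=k\sum_n n a_n$, bound $\sum_j a_j' L_d^j(\Gamma,\mu)$ from above, and divide by $k$ --- the key technical step is genuinely different. The paper replaces $L_d^m(\Gamma,\mu)$ by its monotone envelope $\tilde L^m=\max_{m'\le m}L_d^{m'}(\Gamma,\mu)$, checks that this envelope is still subadditive with the same linear growth rate $l_d(\Gamma,\mu)$, and then runs a blocking argument, grouping the indices $j$ into blocks of length $k$ and using $\tilde L^{ik}\le i\tilde L^k$; the extra ``$1+$'' in the constant is precisely the cost of rounding $j$ up to a multiple of $k$. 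You instead invoke the affine majorant $L_d^m(\Gamma,\mu)\le(\ell+\varepsilon)m+C_\varepsilon$, the standard consequence of Fekete's lemma, and conclude in one line; this avoids the envelope construction and the blocking entirely, and yields the sharper bound $l_d\left(\Gamma,\sum_n a_n\mu^{*n}\right)\le\left(\sum_n n a_n\right) l_d(\Gamma,\mu)$, consistent with Forghani's exact formula cited in the remark, which of course implies the stated inequality. Your observation about why plain subadditivity is insufficient --- it only gives the bound $\sum_n a_n L_d^n(\Gamma,\mu)$, which can be positive even when $\ell=0$, as for the simple random walk on $\mathbb{Z}$ --- correctly isolates the one point where care is needed. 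Like the paper, you implicitly work under $\sum_n n a_n<\infty$, which is the only case in which the inequality has content and the only case used in the sequel.
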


\begin{rem}
We should mention that Forghani gave an exact expression of  $l_d(\Gamma,\sum_{n=1}^{\infty}a_n\mu^{*n})$ in   \cite[Theorem 4.5]{forghani}.   However, we do not need this exact expression.    Also the first author gave the proof of  Proposition~\ref{prop:conv_comb}  in \cite{Iz23} under certain additional assumptions; but it turns out  to be overcomplicated.   As we will see below, the proof of Proposition~\ref{prop:conv_comb}  presented here is quite elementary.  
\end{rem}

Note that Proposition~\ref{prop:conv_comb} tells us that   $l_d(\Gamma,\mu)=0$ implies  $l_{d}(\Gamma,\sum_{n=1}^{\infty}a_n\mu^{*n})=0$ as long as  $\sum_{n=1}^{\infty}na_n <\infty$.    Furthermore, assuming $\mathrm{supp}\,\mu$ generates $\Gamma$ and   taking $a_n\not= 0$ for each $n\in \mathbb{Z}_{>0}$,   we obtain such a measure with  $\mathrm{supp}\ \sum_{n=1}^{\infty}a_n\mu^{*n} \supset \Gamma\setminus  \{e\}$.   Now suppose that $\Gamma$ is weakly Liouville.    Then there exists a symmetric probability measure $\mu$ with finite  support $S$ that generates $\Gamma$, and having  $l_{d_S}(\Gamma,\mu)=0$, where $d_S$ denotes the word metric on  $\Gamma$ with respect to $S$.   Since the identity map   $\mathrm{id}\colon (\Gamma,d_S)\rightarrow (\Gamma,d)$ is  Lipschitz for any left invariant metric $d$ on $\Gamma$,   we see that $l_d(\Gamma,\mu)=0$ for any left invariant metric $d$ on  $\Gamma$.  Moreover, since the support of $\mu^{*n}$ is finite for each  $n$, $\sum_{\Gamma} d(e,g)^2 d\mu^{*n}(g)<\infty$ for each $n$.   Thus,  by taking $\{a_n\}_{n\in \mathbb{Z}_{>0}}$  suitably, we can make a measure $\sum_{n=1}^{\infty}a_n \mu^{*n}$ to  have finite second moment.    Letting $\nu$ to be $\sum_{n=1}^{\infty}a_n \mu^{*n}$, we obtain the  following:  
\begin{cor}
Let $\Gamma$ be a weakly Liouville group.  Then $\Gamma$ admits a probability measure $\nu$ satisfying  \begin{itemize} \setlength{\itemsep}{0.4cm} \setlength{\parskip}{-0.2cm}  \item[\rm{(i)}] $\nu$ is symmetric with finite second moment,   \item[\rm{(ii)}] $\mathrm{supp}\ \nu \supset \Gamma\setminus \{e\}$, 	      and   \item[\rm{(iii)}] $l_d(\Gamma,\nu)=0$ for any left invariant metric 	      $d$. \end{itemize} 
\end{cor}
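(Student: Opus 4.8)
The plan is to obtain the Corollary directly from Proposition~\ref{prop:conv_comb}, essentially turning the discussion preceding its statement into a clean argument. Since $\Gamma$ is weakly Liouville there is, as recorded above, a symmetric probability measure $\mu$ with finite support $S$ that generates $\Gamma$ and satisfies $l_{d_S}(\Gamma,\mu)=0$, where $d_S$ is the word metric associated with $S$. The first step is to upgrade this to every left-invariant metric: writing a geodesic word $g=s_1\cdots s_k$ with $k=d_S(e,g)$ and $s_i\in S$, left-invariance of an arbitrary left-invariant metric $d$ gives $d(e,g)\le k\max_{s\in S}d(e,s)=:C_d\,d_S(e,g)$, hence $L_d^n(\Gamma,\mu)\le C_d\,L_{d_S}^n(\Gamma,\mu)$ for all $n$; dividing by $n$ and letting $n\to\infty$ yields $l_d(\Gamma,\mu)\le C_d\,l_{d_S}(\Gamma,\mu)=0$. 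Note that all moments that occur in what follows are finite because $\mathrm{supp}\,\mu$ is finite (so each $\mu^{*n}$ has finite support), which also guarantees that every drift below is well defined via the subadditivity recalled in the Appendix.

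Next I would fix once and for all a sequence of weights $a_n>0$, $n\ge1$, with $\sum_{n\ge1}a_n=1$ and $\sum_{n\ge1}n^2a_n<\infty$ --- for instance $a_n=2^{-n}$ --- and set $\nu:=\sum_{n\ge1}a_n\mu^{*n}$. Properties (i) and (ii) are then verified by hand. Each $\mu^{*n}$ is symmetric, being a convolution power of the symmetric measure $\mu$, so $\nu$ is symmetric. For the support, $\mathrm{supp}\,\mu^{*n}$ contains $S^n$ and is contained in the $d_S$-ball $B(e,n)$; since $S$ generates $\Gamma$, every $g\in\Gamma\setminus\{e\}$ lies in some $S^n$ with $n\ge1$, so $\mathrm{supp}\,\nu=\bigcup_{n\ge1}\mathrm{supp}\,\mu^{*n}\supseteq\Gamma\setminus\{e\}$, which is (ii). For the second moment, $\mathrm{supp}\,\mu^{*n}\subseteq B(e,n)$ gives $\sum_g d_S(e,g)^2\mu^{*n}(g)\le n^2$, hence $\sum_g d_S(e,g)^2\,\nu(g)\le\sum_{n\ge1}n^2a_n<\infty$, which is (i); the same computation with $C_d^2n^2$ in place of $n^2$ shows $\nu$ has finite second moment with respect to any left-invariant $d$ as well, should that be needed downstream.

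Finally, for property (iii), I would apply Proposition~\ref{prop:conv_comb} with the measure $\mu$ and an arbitrary left-invariant metric $d$; its hypothesis $\sum_{n\ge1}na_n<\infty$ holds because $\sum_{n\ge1}na_n\le\sum_{n\ge1}n^2a_n<\infty$, and $\nu$ has finite first moment with respect to $d$ by the same bound, so
\[
l_d(\Gamma,\nu)=l_d\!\left(\Gamma,\sum_{n\ge1}a_n\mu^{*n}\right)\le\left(1+\sum_{n\ge1}na_n\right)l_d(\Gamma,\mu)=0,
\]
using the first step. As $d$ was an arbitrary left-invariant metric, this is exactly (iii).

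There is no serious obstacle: the whole content of the section is concentrated in Proposition~\ref{prop:conv_comb}, which we are entitled to assume. The only points requiring a moment's care are that all the drifts in play are defined (automatic from finiteness of $\mathrm{supp}\,\mu$, which makes every $\mu^{*n}$ finitely supported and hence of finite moments in every metric), and that a \emph{single} choice of weights $\{a_n\}$ can simultaneously secure finite second moment and $\sum_{n\ge1}na_n<\infty$; a geometrically decaying sequence does both, and does so uniformly in the metric, which is precisely what makes conclusion (iii) hold for all left-invariant $d$ at once rather than for one distinguished metric only.
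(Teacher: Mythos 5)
Your proposal is correct and follows essentially the same route as the paper: pass from $l_{d_S}(\Gamma,\mu)=0$ to $l_d(\Gamma,\mu)=0$ via the Lipschitz property of the identity map, form $\nu=\sum_n a_n\mu^{*n}$ with all $a_n>0$ and $\sum_n n^2 a_n<\infty$ (using finiteness of $\mathrm{supp}\,\mu^{*n}$ for the moment bounds), and invoke Proposition~\ref{prop:conv_comb} for (iii). The only difference is that you make the choice of weights and the moment estimates explicit, which the paper leaves as ``by taking $\{a_n\}$ suitably.''
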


\begin{proof}[Proof of Proposition~\ref{prop:conv_comb}]

 In what follows, we fix a left invariant metric $d$ on $\Gamma$,   drop $d$ from $l_d(\Gamma,\mu)$ and $L^n_d(\Gamma,\mu)$, and denote  them simply by $l(\Gamma,\mu)$ and $L^n(\Gamma,\mu)$ respectively.  

Set  \begin{equation*}  \tilde L^n := \max \{L^m(\Gamma,\mu) \mid m\leq n \}.   \end{equation*}  Then $\{\tilde L^n\}_{n\in \mathbb{Z}_{>0}}$ is a nondecreasing sequence, and  satisfies $\tilde L^n \geq L^n(\Gamma,\mu)$ for each $n\in  \mathbb{Z}_{>0}$.    Furthermore it is subadditive as shown as follows:   Let $k_n$ be the smallest positive integer satisfying  $\tilde L^{k_n}= \tilde L^n$, then, for any $m<k_n$, we have  $L^m(\Gamma,\mu)<\tilde L^{k_n}$, and hence  $\tilde L^{k_n}=\max \{L^m(\Gamma,\mu)\mid m\leq k_n\}=L^{k_n}(\Gamma,\mu)$. Thus, for $m< k_n$, we have  \begin{equation*}  \tilde L^{n} = \tilde L^{k_n} = L^{k_n}(\Gamma,\mu)   \leq L^{k_n-m}(\Gamma,\mu) + L^m(\Gamma,\mu)   \leq \tilde L^{k_n-m} + \tilde L^m  \leq \tilde L^{n-m} + \tilde L^m  \end{equation*}  by the subadditivity of $\{L^n(\Gamma,\mu)\}_{n\in \mathbb{Z}_{>0}}$, the  definition and the nondecreasing property of   $\{\tilde L^n\}_{n\in \mathbb{Z}_{>0}}$.     For $m\geq k_n$, assuming $m<n$, we see that  \begin{equation*}  \tilde L^{n} = \tilde L^{k_n} \leq \tilde L^m   \leq  \tilde L^m + \tilde L^{n-m}  \end{equation*}  by the nondecreasing property of $\{\tilde L^n \}_{n\in \mathbb{Z}_{>0}}$.   In any case, we have shown that $\{\tilde L^n\}_{n\in \mathbb{Z}_{>0}}$ is  subadditive.    In particular, we have  \begin{equation}  \label{eq:app-2}   \tilde L^{ik} \leq i \tilde L^k  \end{equation}  for any positive integers $i$ and $k$.   As already remaked above it is well-known that the subadditivity of   $\{L^n\}_{n \in \mathbb{Z}_>0}$ and   $\{L^n(\Gamma,\mu)\}_{n \in \mathbb{Z}_{>0}}$  implies the existence of the following limits:  \begin{equation}  \label{eq:limits}  \lim_{n\to \infty}\frac{\tilde L^n}{n}=  \inf_{n \in \mathbb{Z}_{>0}} \frac{\tilde L^n}{n}, \quad  \lim_{n\to \infty}\frac{L^n(\Gamma,\mu)}{n}=  \inf_{n \in \mathbb{Z}_{>0}} \frac{L^n(\Gamma,\mu)}{n}.   \end{equation}    If $\{L^n(\Gamma, \mu)\}_{n \in \mathbb{Z}_{>0}}$ is bounded, then so is  $\{\tilde L^n\}$, and the both limit above are equal to $0$.    If $\{L(\Gamma,\mu)\}_{n \in \mathbb{Z}_{>0}}$ is unbounded, then  there exists an unbounded subsequence $\{k_n\} \subset \mathbb{Z}_{>0}$ that  satisfies $\tilde L^{k_n}=L^{k_n}(\Gamma,\mu)$ for each $k_n$;   indeed, for any $k_l$ with $\tilde L^{k_l}=L^{k_l}(\Gamma,\mu)$,   by the unboundedness of $\{L^n(\Gamma,\mu)\}_{n\in \mathbb{Z}_{>0}}$,   there exists the smallest $k_m$ satisfying   $L^{k_m}(\Gamma,\mu)>L^{k_l}(\Gamma,\mu)$, and we have   $\tilde L^{k_m}=L^{k_m}(\Gamma,\mu)$.  Thus we have  \begin{equation*}  \lim_{n\to \infty}\frac{\tilde L^{k_n}}{k_n}  =\lim_{n\to \infty}\frac{L^{k_n}(\Gamma,\mu)}{k_n}.   \end{equation*}  The limits on the both sides exist, since these limits are those   of subsequences of the convergent sequences in (\ref{eq:limits}).    Hence these limits must coincide with those of the original sequences.   This means  \begin{equation}  \label{eq:limit2}  \lim_{n\to \infty} \frac{\tilde L^n}{n}  = \lim_{n\to \infty} \frac{L^n(\Gamma,\mu)}{n}=l(\Gamma,\mu).   \end{equation}
 \medskip
 Now consider the measure $\sum_{n=1}^{\infty}a_n\mu^{*n}$.   Take $k \in \mathbb{Z}_{>0}$ and fix it for a while.   Note that $\left(\sum_{n=1}^{\infty}a_n\mu^{*n}\right)^{*k}$ is again a  convex combination of $\mu^{*j}$'s; we can express as   $\left(\sum_{i=n}^{\infty}a_n\mu^{*n}\right)^{*k}=  \sum_{j=1}^{\infty}a_j'\mu^{*j}$,   where $a_j'= \sum_{n_1+\dots + n_k=j} a_{n_1}\dots a_{n_k}$.    Furthermore, we have $\sum_{j=1}^{\infty}a_j'=1$,   since $\left(\sum_{n=1}^{\infty}a_n\mu^{*n}\right)^k$ is a probability  measure.   Recalling $\sum_{n=1}^{\infty} a_n =1$, we get  \begin{equation*}  \begin{split}   \sum_{j=1}^{\infty} ja_j'   & =\sum_{n_1, \dots, n_k} (n_1 + \dots + n_k) a_{n_1}\dots a_{n_k} \\  & = \sum_{n_2,\dots, n_k}\left(      \sum_{n_1}  (n_1 + \dots + n_k) a_{n_1}\right)a_{n_2} \dots a_{nk} \\  & = \sum_{n_2,\dots, n_k} \left(\left(\sum_{n_1}n_1 a_{n_1}\right)+       n_2 +  \dots + n_k\right) a_{n_2} \dots  a_{n_k} \\  & = \sum_{i_1} n_1 a_{n_1} + \dots + \sum_{n_k}n_k a_{n_k} =    k \sum_n n a_n.   \end{split}  \end{equation*}  Now set $b_i = \sum_{j=k(i-1)+1}^{ki} a_j'$ ($i\in \mathbb{Z}_{>0}$).   Then we get   \begin{equation*}  \sum_{i=1}^{\infty}i b_i    = \sum_{j=1}^{\infty} \left\lceil{\frac{j}{k}}\right\rceil a_j'    \leq \frac{1}{k} \left(k+ \sum_{j=1}^{\infty}j  a_j' \right)    = 1+  \sum_{n=1}^{\infty} n a_n,   \end{equation*}  where $\lceil s \rceil$ denotes the smallest integer greater than or  equal to $s \in \mathbb{R}$,   and we have used $\lceil j/k \rceil \leq 1 + (j/k)$ and   $\sum_{j=1}^{\infty} a_j'=1$.   Together with (\ref{eq:app-2}) and the nondecreasing property of   $\{\tilde L^n\}_{n\in \mathbb{Z}_{>0}}$, we obtain  \begin{equation*}      \left(1+ \sum_{n=1}^{\infty} n a_n\right) \tilde L^k     \geq \sum_{i=1}^{\infty} b_i \left(i \tilde L^k \right)     \geq \sum_{i=1}^{\infty} b_i \tilde L^{ik}    \geq \sum_{j=1}^{\infty} a_j' \tilde L^j.   \end{equation*}  On the other hand, recalling the definitions of $\tilde L^n$ and  $L^n(\Gamma,\mu)$, and that   $\sum_{j=1}^{\infty}a_j'\mu^{*j}=(\sum_{n=1}^{\infty}a_n\mu^{*n})^{*k}$,   we have  \begin{equation*}  \begin{split}    \sum_{j=1}^{\infty} a_j' \tilde L^j    & \geq \sum_{j=1}^{\infty} a_j' L^j(\Gamma,\mu)      = \sum_{j=1}^{\infty} \sum_{\Gamma} d(e,g)\ a_j' \mu^{*j}(g) \\   & = \sum_{\Gamma} d(e,g) \       \left(\sum_{n=1}^{\infty}a_n\mu^{*n}(g)\right)^{*k}     = L^k\left(\Gamma,\sum_{n=1}^{\infty}a_n\mu^{*n} \right)  \end{split}  \end{equation*}  
Therefore we get  \begin{equation*}  \left(1 + \sum_{n=1}^{\infty} n a_n\right)  \frac{\tilde L^k}{k} \geq   \frac{L^k(\Gamma,\sum_{n=1}^{\infty}a_n\mu^{*n})}{k}.   \end{equation*}  Letting $k\to \infty$ and recalling (\ref{eq:limit2}) completes the  proof of Proposition~\ref{prop:conv_comb}.  

\end{proof} 

\section*{Conflict of interest statement}

On behalf of all authors, the corresponding author states that there
is no conflict of interest.

\lyxaddress{Department of Mathematics, Keio University, Kohoku-ku, Yokohama 223-8522,
Japan.}

\lyxaddress{Section de mathématiques, Université de Genève, Case postale 64,
1211 Genève, Switzerland; Mathematics department, Uppsala University,
Box 256, 751 05 Uppsala, Sweden.}
\end{document}